\numberwithin{equation}{section}
\newtheorem{theorem}{Theorem}[section]
\newtheorem{lemma}[theorem]{Lemma}
\newtheorem{proposition}[theorem]{Proposition}
\theoremstyle{definition}
\newtheorem{definition}[theorem]{Definition}
\newtheorem{notation}[theorem]{Notation}
\newtheorem{question}[theorem]{Question}
\theoremstyle{definition}
\newtheorem{remark}[theorem]{Remark}
\newtheorem{example}[theorem]{Example}
\DeclareMathOperator{\reg}{reg}
\DeclareMathOperator{\depth}{depth}
\DeclareMathOperator{\Ass}{Ass}
\DeclareMathOperator{\Min}{Min}
\DeclareMathOperator{\gr}{gr}
\DeclareMathOperator{\Spec}{Spec}
\DeclareMathOperator{\grade}{grade}
\DeclareMathOperator{\NP}{NP}
\DeclareMathOperator{\SP}{SP}
\DeclareMathOperator{\RV}{RV}
\DeclareMathOperator{\conv}{convexhull}
\DeclarePairedDelimiter\ceil{\lceil}{\rceil}
\newcommand{\ZZ}{{\mathbb Z}}
\newcommand{\NN}{{\mathbb N}}
\newcommand{\QQ}{{\mathbb Q}}
\newcommand{\RR}{{\mathbb R}}
\newcommand{\rI}[1]{\overline{I^{{#1}}}}
\newcommand{\rsI}[1]{\overline{I^{\left({#1}\right)}}}
\def\R{{\mathcal R}}
\def\R{{\mathcal R}}
\def\mm{{\mathfrak m}}
\def\pp{{\mathfrak p}}
\def\qq{{\mathfrak q}}
\def\a{{\bf a}}
\def\b{{\bf b}}
\def\p{{\bf p}}
\newcommand{\kk}{\Bbbk}
\def\1{{\bf 1}}
\def\0{{\bf 0}}
\begin{document}
	
	\title{Rational symbolic powers of ideals}

\author{Souvik Dey}
\address[S. Dey]{Department of Mathematical Sciences
850 West Dickson Street, University of Arkansas
Fayetteville, Arkansas 72701}
\email{souvikd@uark.edu}

\author{T\`ai Huy H\`a}
\address[T. H\`a]{Tulane University, Department of Mathematics, 6823 St. Charles Ave., New Orleans, LA 70118, USA}
\email{tha@tulane.edu}
%\urladdr{???}

\author{Dipendranath Mahato}
\address[D. Mahato]{Tulane University, Department of Mathematics, 6823 St. Charles Ave., New Orleans, LA 70118, USA}
\email{dmahato@tulane.edu}

\author{Paolo Mantero}
\address[P. Mantero]{Department of Mathematical Sciences
850 West Dickson Street, University of Arkansas
Fayetteville, Arkansas 72701}
\email{pmantero@uark.edu}

\keywords{symbolic power, rational power, symbolic region, monomial ideal, strongly golod, asymptotic stability, binomial expansion, Rees valuations.}
\subjclass[2020]{13C05, 13A15, 13D45}

\begin{abstract}
	We introduce and study rational symbolic powers of ideals in Noetherian rings. We give membership criteria for rational symbolic powers and discuss settings where they agree with integer symbolic powers. We investigate the binomial expansion formula for rational symbolic powers of mixed sums of ideals. Finally, we study rational symbolic powers of monomial ideals. In this case, we give a convex-geometric description of the rational symbolic powers. We also show that the filtration of rational symbolic powers of a monomial ideal is asymptotically stable and, as a consequence, deduce that the asymptotic regularity and asymptotic depth for this filtration exist.
\end{abstract}

\maketitle
	
%%%%%%%%%%%%%%%%%%%%%%%%%%%%%%%%%%
%%%%%%%%%%%%%%%%%%%%%%%%%%%%%%%%%%

\section{Introduction} \label{sec.intro}

\emph{Symbolic powers} of ideals have long been a fundamental object of study in commutative algebra, arising naturally in algebraic geometry through their connection with vanishing conditions along varieties. Their structure and asymptotic behavior have been central to major advances, such as the uniform containment theorems of Ein--Lazarsfeld--Smith \cite{ELS}, Hochster--Huneke \cite{HH}, and Ma--Schwede \cite{MS}, as well as the Zariski--Nagata theorem (see, for example, \cite{Dd2018}).  
In contrast, \emph{rational powers} of ideals, introduced by Swanson and Huneke \cite{sh}, are much less understood (see for instance  \cite{bisui2024rational}, or \cite{DONGRE202339} for the case of monomial ideals). Rational powers interpolate between integral closures of ordinary powers, providing a finer filtration of ideals.  

The goal of this paper is to connect these two directions by introducing and studying the notion of \emph{rational symbolic powers}. If $R$ is a Noetherian ring and $I \subseteq R$ is an ideal, we define the $u$-th rational symbolic power of $I$, for $u \in \QQ_+$, by
\[
\overline{I^{(u)}} \;=\; \bigcap_{\pp \in \Ass(I)} \left(\overline{I^u} R_\pp \cap R\right),
\]
where $\overline{I^u}$ is the $u$-th rational power of $I$ in the sense of \cite{sh}.  

Our first result, \Cref{thm.equivDef}, provides a concrete membership criterion for rational symbolic powers, in terms of usual symbolic powers and rational powers of usual symbolic powers, for ideals with locally reduced tangent cones. Specifically, we prove the following theorem.
\medskip

\noindent\textbf{Theorem \ref{thm.equivDef}.}
Let $I \subseteq R$ be an ideal. Suppose that the associated graded ring $\gr_{I_\pp}(R_\pp)$ is reduced for any $\pp \in \Ass(I)$. 
    Then, for any positive rational number $u = \frac{p}{q}$, we have
    $$\rsI{u} = \left\{ f \in R ~\big|~ f^q \in \rsI{\frac{p}{1}} \right\}=\left\{ f \in R ~\big|~ f^q \in I^{(p)} \right\}=\overline{(I^{(p)})^{1/q}}.$$

\smallskip

If, in addition, the ideals have no embedded primes, then we show in \Cref{thm.RatSymPower=SymPower} that 
rational symbolic powers agree with specific integer symbolic powers. Recall that if $u\in \QQ_+$, then $\ceil{u}$ denotes the smallest $t\in \ZZ_+$ with $t\geq u$.

\medskip

\noindent \textbf{Theorem \ref{thm.RatSymPower=SymPower}.}
	Let $R$ be a Noetherian commutative ring. Let $I \subseteq R$ an ideal and let $u = \frac{p}{q} \in \QQ_+$. Assume the following two conditions hold:
\begin{enumerate}[(i)]
%\item $I^{(p)}$ is integrally closed;
\item $\Ass(I) = \Min(I)$; and 
\item The associated graded ring $\gr_{I_\pp}(R_\pp) = \bigoplus_{t \ge 0} I_\pp^t/I_\pp^{t+1}$ is reduced, for every $\pp \in \Ass(I)$.
\end{enumerate}
Then, 
$$\rsI{u} = I^{(\ceil{u})}.$$

\smallskip

%A cornerstone of the study of symbolic powers is the \emph{uniform containment theorem}, due to Ein--Lazarsfeld--Smith, Hochster--Huneke, and Ma--Schwede. It guarantees that symbolic powers of a radical ideal are always contained in suitable ordinary powers depending only on the big-height of the given ideal. In \Cref{thm.Containment}, we show that this containment phenomenon also persists for rational symbolic powers.  
%Another central result in the classical theory is the aforementioned Zariski--Nagata theorem, which relates symbolic powers to vanishing conditions described in terms of differential operators. In \Cref{thm.RSvsDP}, we establish the rational analog of the Zariski--Nagata theorem.  

Beyond these foundational results, we show that rational symbolic powers can also be realized as \emph{saturated} powers. Here, for ideals $I$ and $K$, and a positive rational number $u$, the $u$-th \emph{saturated power} of $I$ with respect to $K$ is defined as
$$\overline{I^{(u)}_K} := \rI{u} : K^\infty.$$

\medskip

\noindent\textbf{Theorem \ref{lem.symbSat_ass}.}
	Let $R$ be a Noetherian commutative ring and let $I \subseteq R$ be an ideal. Let $u$ be a positive rational number. Set
	$$K := \bigcap_{\substack{\pp \in \Ass_R^*(I) \\ \grade(\pp, R/I) \ge 1}} \pp \qquad \quad \text{ and } \qquad \quad K_u := \bigcap_{\substack{\pp \in \Ass_R(\overline{I^u}) \\ \grade(\pp,R/I) \ge 1}} \pp.$$
	Let $x \in K$ be any element that is regular on $R/I$, provided  $\{\pp \in \Ass_R^*(I) \mid \grade(\pp,R/I)\ge 1\}\neq \emptyset$, and let $x = 1$ otherwise. Then,
	$$\overline{I^{(u)}} = \overline{I^{(u)}_K} = \overline{I^{(u)}_{K_u}} = \overline{I^{(u)}_{(x)}}.$$
	In particular, an irredundant primary decomposition of $\overline{I^{(u)}}$ is obtained from an irredundant primary decomposition of $\overline{I^u}$ by intersecting only the components primary to ideals $\pp$ which are contained in some $\mathfrak q\in \Ass_R(I)$.
   %given an arbitrary irredundant primary decomposition of $\overline{I^u}$,  $\overline{I^{(u)}}$ is the intersection of its components that are primary to ideals $\pp\in \Ass_R(\overline{I^u})$ such that $\grade(\pp,R/I)=0$, i.e., $\pp$ is contained in an element of $\Ass_R(I)$. 

\medskip

\Cref{lem.symbSat_ass} has an interesting consequence, namely, for homogeneous ideals $I$ in polynomial rings over a field of characteristic 0, the rational symbolic power $\rsI{u}$ is strongly Golod for all $u \in \QQ \cap [2,\infty)$; see \Cref{golod}. Furthermore, in Theorem \ref{thm.binSat}, we generalize the binomial expansion formula for \emph{mixed} sums of ideals (extending \cite{HJKN23}) and, in Theorem \ref{thm.binRatSym}, we prove that whenever the binomial expansion formula holds for a rational power, it also holds for the corresponding rational symbolic power. 

Finally, we restrict out attention to monomial ideals in polynomial rings. In this case, we exhibit a valuative criterion and a convex-geometric description for rational symbolic powers; see \Cref{prop.RatSymMember} and \Cref{thm.ratMonIdeal}. In particular, if $I$ is a monomial ideal and $u \in \QQ_+$, then
$$\rsI{u} = \{f \in R ~\big|~ v(f) \ge u \cdot v(I) \text{ for all } v \in \RV_a(I)\},$$
where $\RV_a(I)$ is a subset of the Rees valuations of $I$. As a consequence, there exists a convex polyhedron $\Sigma(I)$ so that 
$$\rsI{u} = \langle \{x^\a ~\big|~ \a \in u \cdot \Sigma(I)\} \rangle.$$

This valuative (and convex-geometric) interpretation allows us to study asymptotic properties of the filtration of rational symbolic powers of monomial ideals. Our results establish the stability and asymptotics behaviors of this filtration for monomial ideals.  

\medskip

\noindent\textbf{Theorems \ref{thm.AsympStable} and \ref{thm.lengthCoh}.}
Let $I \subseteq R = \kk[x_1, \dots, x_n]$ be a monomial ideal. 
\begin{enumerate} 
\item The filtration $\{\overline{I^{(\frac{k}{e})}}\}_{k \in \ZZ_+}$ of rational symbolic powers of $I$ is asymptotically stable. In particular, the following limits exist:
\[
\lim_{u \to \infty} \frac{\reg \overline{I^{(u)}}}{u}
\quad \text{and} \quad
\lim_{u \to \infty} \depth (R / \overline{I^{(u)}}).
\]
\item Assume that $\lambda(H^i_\mm(R/\rsI{u})) < \infty$ for $n \gg 0$. Then, the limit
    $$\lim_{u \rightarrow \infty} \dfrac{\lambda(H^i_\mm(R/\rsI{u}))}{u^n}$$
    exists and it is a rational number.
\end{enumerate}

\smallskip

Although these statements present analogies with %some of our results may be viewed as rational analogs of 
classical theorems for symbolic or rational powers, their proofs require significant new care. The rational symbolic setting introduces subtle technical challenges --- for example, handling integral closures with rational exponents, or ensuring that localization, saturation, and convex-geometric arguments still behave well under rational scaling. As a result, while our strategies are often inspired by known arguments, many proofs require substantial modifications to work in the rational context.  

\subsection*{Outline of the paper}
In Section~2, we introduce rational symbolic powers and develop their basic properties. In particular, establish the membership criterion (Theorem~\ref{thm.equivDef}), and discuss when rational symbolic powers agree with integer symbolic powers (Theorem~\ref{thm.RatSymPower=SymPower}).  

Section~3 explores the relationship between rational symbolic powers and saturated powers (Theorem \ref{lem.symbSat_ass}). Using this connection, we generalize the binomial expansion formula of H\`a--Jayanthan--Kumar--Nguyen to the rational setting (Theorem \ref{thm.binSat}). We also deduce that when the binomial expansion formula holds for rational powers, it also holds for rational symbolic powers (\Cref{thm.binRatSym}).  

In Section~4, we turn to monomial ideals. We describe rational symbolic powers of monomial ideals in terms of Rees valuations and corresponding convex polyhedron (Theorem \ref{thm.ratMonIdeal}). We also establish the asymptotic stability of the filtration of rational symbolic powers (Theorem \ref{thm.AsympStable}), which implies the existence of asymptotic regularity and depth, and prove the existence of asymptotic length of local cohomology modules of rational symbolic powers (Theorem \ref{thm.lengthCoh}).

%%%%%%%%%%%%%%%%%%%%%%%%%%%%%

\medskip
\noindent\textbf{Acknowledgment.} Part of this work was done while the first author visited the second and third authors at Tulane University. The authors thank Tulane University for its hospitality. Dey was partially supported by the Charles University Research Center program No.UNCE/SCI/022 and a grant GACR 23-05148S from the Czech Science Foundation. H\`a was partially supported by Simons Foundation grant \# 850912. Mantero was partially supported by Simons Foundation grant \# 962192.

%%%%%%%%%%%%%%%%%%%%%%%%%%%%%%%%%%%%%%%%%%%
%%%%%%%%%%%%%%%%%%%%%%%%%%%%%%%%%%%%%%%%%%%

\section{Rational symbolic powers} \label{sec.RatSymb}

In this section, we introduce the notion of rational symbolic powers of an ideal and explore many foundational properties.

Throughout this paper, unless otherwise stated (see Section 4) $R$ denotes a commutative Noetherian ring. Recall that the integral closure $\overline{I}$ of an ideal $I\subseteq R$ is defined as
$$
\overline{I}=\{f\in R\,\mid\,f^n + a_1f^{n-1} + \ldots + a_{n-1}f + a_n=0 \text{ for some }n\in \ZZ_+ \text{ and }a_h\in I^h \ \forall \ h\}.
$$
We shall freely use the standard fact that localization commutes with integral closures. We begin by recalling the notion of rational powers, as introduced by Swanson and Huneke \cite{sh}.

\begin{definition}[{\cite[Definition 10.5.1]{sh}}]\label{def.RationalPower}
Let $I \subseteq R$ be an ideal. For a positive rational number $u = \frac{p}{q}$, the \emph{$u$-th rational power} of $I$ is defined as
$$\rI{u} = \left\{ f \in R ~\big|~ f^q \in \rI{p}\right\}.$$
\end{definition}

The main object of study in this paper is given in the following definition.

\begin{definition}[Rational Symbolic Powers] \label{def.RationalSymbPower}
Let $I \subseteq R$ be an ideal and let $u \in \QQ$ be a positive rational number. The \emph{$u$-th rational symbolic power} of $I$ is defined as
$$\rsI{u} = \bigcap_{\pp \in \Ass(I)} \left(\rI{u} R_\pp \cap R\right).$$
\end{definition}

\begin{notation}\label{not.n/1}
 Let $I\subseteq R$ be an ideal
 \begin{itemize}
     \item  When $u \in \ZZ_+$, $\rI{u}$ coincides with the integral closure of $I^u$, by definition. However, it is not always true that the $u$-th rational symbolic power of $I$ agrees with the integral closure of $I^{(u)}$, even when $u=1$, see e.g. \Cref{Ex.u=1}. To avoid potential confusion, when $u \in \ZZ_+$, we shall write $\rsI{\frac{u}{1}}$ for the $u$-th rational symbolic power and reserve $\rsI{u}$ for the integral closure of the $u$-th symbolic power of $I$.
     \item  We shall use $e$ to denote one of the numbers specified in \Cref{rmk.RSP}(3) below.
 \end{itemize}

\end{notation}

\begin{remark} \label{rmk.RSP} 
Let $I\subseteq R$ be an ideal.
\begin{enumerate}
\item It was shown in \cite[Proposition 10.5.2]{sh} that $\rI{u}$ does not depend on the rational presentation of $u$. Thus, by definition, neither does $\rsI{u}$.
\item Since rational powers of an ideal are integrally closed, it follows easily that rational symbolic powers of an ideal are integrally closed too.
\item By \cite[Proposition 10.5.5]{sh}, there exists $e \in \ZZ_+$ (in fact, all multiples of $e$ work the same way) 
such that all rational powers of $I$ are of the form $\rI{\frac{h}{e}}$, for some $h \in \ZZ_+$. Thus, all rational symbolic powers of $I$ are of the form $\rsI{\frac{h}{e}}$, for some $h \in \ZZ_+$. 

As pointed out in \cite[Proposition 10.5.5]{sh}, $e$ can be taken to be the least common multiple of $v(I)$ when $v$ varies through all Rees valuations of $I$. (see Section \ref{sec.MonIdeal} for the definition of Rees valuations and $v(I)$.)
\item It can be seen from \Cref{def.RationalPower} that $\big\{\rI{\frac{k}{e}}\big\}_{k \in \NN}$ is a filtration of ideals. It then follows that $\big\{\rsI{\frac{k}{e}}\big\}_{k \in \NN}$ is also a filtration of ideals.
\end{enumerate}
\end{remark}

\begin{example}
    \label{ex.ratSymPower}
    Consider $I=(xy^5,x^2y^2,x^4y) \subseteq \QQ[x,y]$. Then, $\Ass(I)=\{(x),(y),(x,y)\}$ and, according to \cite[Example 4.2]{DONGRE202339}, one has
    $$\rI{\frac{4}{3}}=(x^3y^3,x^4y^2,x^2y^5).$$
   It follows easily from \Cref{def.RationalSymbPower} that $\rsI{\frac{4}{3}} = \rI{\frac{4}{3}}.$ 
\end{example}

\begin{example}
    \label{ex.ratSymPower2}
    Consider $I=(xy,yz,zx) \subseteq \QQ[x,y,z]$. Then 
   $$\rI{2}=(x^2y^2,xy^2z,x^2yz,y^2z^2,xyz^2,z^2x^2)\subsetneq \rsI{\frac{2}{1}}=(x^2y^2,y^2z^2,z^2x^2,xyz)=\overline{I^{(2)}}.$$ 
   % $\rsI{\frac{2}{1}}=(x^2y^2,y^2z^2,z^2x^2,xyz)$ and $\rI{2}=(x^2y^2,xy^2z,x^2yz,y^2z^2,xyz^2,z^2x^2)$. Note that $\rsI{\frac{2}{1}}\neq \rI{2}$.
\end{example}

The following lemmas follow immediately from \Cref{def.RationalSymbPower}.

\begin{lemma}\label{lem.basic.properties}
   Let $I\subseteq R$ be an ideal.
\begin{enumerate}
\item\label{lem.localization} Let $W \subseteq R$ be any multiplicatively closed subset and let $u \in \QQ_+$. Then,
$$W^{-1}\rsI{u} = \overline{(W^{-1}I)^{(u)}}.$$

\item\label{lem.p-primaryRatPower} Let $\pp \in \Spec R$ and let $Q$ be a $\pp$-primary ideal. Then, $\overline{Q^{(u)}}$ is $\pp$-primary for all $u\in \mathbb Q_+$. 

\item\label{primdecomp} Suppose that $\Ass(I)=\Min(I)=\{\pp_1,\ldots,\pp_s\}$, and let $Q_i$ be the $\pp_i$-primary component of $I$. Then, for all  $u\in \mathbb Q_+$, we have $\overline{I^u}R_{\pp_i}\cap R=\overline{Q_i^{(u)}}$. Particularly, 
    $$\overline{I^{(u)}}=\bigcap_{i=1}^n \overline{Q_i^{(u)}}$$  
    is the irredundant primary decomposition of $\overline{I^{(u)}}$, and 
    $\Ass(\rsI{u})=\Min(\rsI{u}) = \Min(I).$
\end{enumerate}
\end{lemma}

\begin{proof} (\ref{lem.localization}) The conclusion follows from \cite[Proposition 2.4]{bisui2024rational} (see also \cite[Lemma 4.1]{lewis}), in which it was shown that rational powers commute with localization. 

(\ref{lem.p-primaryRatPower}) Since $Q^{\lceil u\rceil}\subseteq \overline{Q^u}\subseteq \sqrt Q=\pp$ for all $u\in \mathbb Q_+$, we have $\sqrt{\overline{Q^u}R_{\pp}}=\sqrt{QR_{\pp}}=\pp R_{\pp}$. Thus, $\overline{Q^u}R_{\pp}$ is $\pp R_{\pp}$-primary, and hence its contraction $\overline{Q^{(u)}}$ is $\pp$-primary.  

(\ref{primdecomp}) The statement follows from part (\ref{lem.p-primaryRatPower}).
\end{proof}

Next, for any $u\in \ZZ_+$, we compare the rational symbolic power $\overline{I^{(ut/t)}}$ (for any $t\in \ZZ_+$) with the integral closure of $I^{(u)}$, i.e. $\overline{I^{(u)}}$.

\begin{lemma}
    \label{lemma.Integer_u}
    Let $I\subseteq R$ be an ideal. Let $u \in \ZZ_+$. Then
\begin{enumerate}
\item $\overline{I^{(u)}}\subseteq \rsI{ut/t}$, for all $t\in \ZZ_+$.
\item These two ideals have the same $\pp$-primary components for all for all $\pp\in \Ass(I)$ containing $\rsI{ut/t}$.
\end{enumerate}
\end{lemma}

\begin{proof}
By \Cref{rmk.RSP}(1), it suffices to prove the inclusion when $t=1$. 
We first observe that, for any $\pp\in \Ass(I)$, we have $\overline{I^uR_\pp \cap R} = \rI{u/1}R_\pp \cap R$. The forward inclusion holds because the right-hand-side is integrally closed and it clearly contains $I^uR_\pp \cap R$. The other inclusion holds because $\rI{u/1}R_\pp \cap R=\rI{u}R_\pp \cap R = \overline{I^uR_\pp} \cap R\subseteq \overline{I^uR_\pp \cap R}$, by \cite[Prop.~10.5.2(5), Prop.~1.1.4 and Rmk~1.1.3(7)]{sh}.

(1)  Now, by the above, we have
$$\overline{I^{(u)}}= \overline{\bigcap_{\pp \in \Ass(I)} (I^u R_\pp \cap R)} \subseteq \bigcap_{\pp \in \Ass(I)} \overline{I^uR_\pp \cap R} = \bigcap_{\pp \in \Ass(I)}(\rI{u}R_\pp \cap R) = \overline{I^{(u/1)}}.$$

(2) For all $\pp\in \Ass(I)$ we have
$$
\left(\overline{I^{(u)}}\right)_\pp = \overline{I_\pp^{(u)}} = \overline{I_\pp^{u}} = \left(\overline{I^u}\right)_\pp = \left(\rI{u/1}\right)_\pp = \left(\rsI{u/1}\right)_\pp,
$$
where the second equality holds because the maximal ideal $\pp_\pp$ of $R_\pp$ lies in $\Ass(I_\pp)$, so $I_\pp^{(n)} = I_\pp^{n}$ for all $n\in \ZZ_+$, the third equality follows since rational powers commute with localization, 
and the last equality is because $\pp\in \Ass(I)$ contains $\rsI{u/1}$.
\end{proof}

\begin{remark}
    \label{rmk.Integer_u}
    Let $I\subseteq R$ be an ideal and let $u\in \ZZ_+$. 
\begin{enumerate}
   
\item In general, the inclusion $\overline{I^{(u)}}\subseteq \overline{I^{(u/1)}}$ in \Cref{lemma.Integer_u}(1) can be strict.
Indeed, in Example \ref{Ex.u=1} below we show that for any field $k$, and any polynomial ring over $k$ with $r+3$ variables, for any $r\geq 0$, there are homogeneous, \emph{almost} monomial ideals $I$, for which  $$\rsI{1}=\overline{I}\subsetneq \rsI{1/1}.$$

 \item On the other hand, we will see in \Cref{thm.equivDef} that when $I$ has locally reduced tangent cone and $u \in \ZZ_+$, the $u$-rational symbolic power of $I$ coincides with the integral closure of the usual $u$-th symbolic power of $I$, i.e., $\rsI{u} = \rsI{u/1}$.
\end{enumerate}
\end{remark}

%{\bf \color{orange}P: I would define an ideal ``almost monomial" if $I$ has a minimal generating set $G$ such that $|G|-1$ elements of $G$ are monomial, but we don't have to do it. We can just point out it is a weakly Newton non-degenerate ideal.}

The next example gives a family of almost monomial ideals for which $\rsI{1}\neq \rsI{1/1}$;  showing the inclusion in \Cref{lemma.Integer_u}(1) can be strict.

\begin{example}[{see \cite{LLYZ2024}}] \label{Ex.u=1}
Let $R_r:=\kk[x,y,z,w_1,\dots,w_r]$, where $k$ is an arbitrary field and $r\geq 0$ any integer. Let 
$$
W_r= \begin{cases}
(0) & \text{ if }r =0;\\
(w_1,\ldots,w_{r})^2\subseteq \kk[w_1,\ldots,w_{r}] & \text{ if }r\geq 1.
\end{cases}
$$

Consider
$$I_r:=(x^3, y^3, x^2y, x^2z-xy^2)+W_rR_r \subseteq R_r.$$
Note that $I_0=(x^3, y^3, x^2y, x^2z - xy^2)$ is precisely the ideal discussed in \cite{LLYZ2024}. 

Since $I_r=I_0\otimes_k W_r \subseteq 
\kk[x,y,z]\otimes_{\kk} \kk[w_1,\ldots,w_{r}]$, the ideal $W_rR_r$ is an integrally closed $(w_1,\ldots,w_{r})$-primary ideal, and, by \cite{LLYZ2024}, $I_0$ is $(x,y)$-primary in $\kk[x,y,z]$ with 
$$\overline{I_0}=(x^3, y^3, x^2y, x^2z, xy^2)=(x^2,y^3,xy^2)\cap (x^3,y,z),$$ we have $I_r^{(1)} = I_r$ and, by transversality of $JR_r$ with $W_rR_r$ for an ideal $J\subseteq \kk[x,y,z]$, 
$$\begin{array}{ll}
\overline{I_r} & =((x^3, y^3, x^2y, x^2z, xy^2)+W_rR_r)\\
& = [(x^2,y^3,xy^2)R_r+ W_rR_r]\cap [(x^3,y,z)R_r+W_rR_r]\\
& \subsetneq (x^2,y^3,xy^2)R_r +W_rR_r \\
& =\overline{I_r}(R_r)_{(x,y,w_1,\ldots,w_{r})}\cap R \\
&=  I_r^{(1/1)}
\end{array}$$
\end{example}

%\textcolor{blue}{ When $u\in \mathbb N$, and $\Ass(R/I)=\Min(R/I)$, is it true that $\bigcap_{\pp \in \Ass(I)} \left(\rI{u} R_\pp \cap R\right)$ is the integral closure of $I^{(u)}$?? Otherwise, the notation in \Cref{def.RationalSymbPower} will be very troublesome.} \textcolor{red}{Answer in Remark above.}

The following result gives an understanding of rational symbolic powers via integer symbolic powers.

\begin{proposition}\label[proposition]{thm.MembershipMonIdeal}
Let $I$ be an ideal in $R$. Then, for $p,q > 0$, we have
$$\rsI{p/q} = \{ f \in R ~\big|~ f^q \in \rsI{p/1}\}.$$
\end{proposition}

\begin{proof}
 By definition, for any $f \in R$, we have
\begin{align*} 
 f \in \rsI{p/q}
 & \Longleftrightarrow
 f \in \rI{p/q}R_\pp \cap R, \ \forall \ \pp \in \Ass(I) \\
 & \Longleftrightarrow
 \frac{f^q}{1} \in \overline{I^{p/1}}R_\pp, \ \forall \ \pp \in \Ass(I)\\% \text{ and }f^q\in R \\
 & \Longleftrightarrow f^q \in \overline{I^{p/1}}R_\pp\cap R, \ \forall \ \pp \in \Ass(I) \\
 & \Longleftrightarrow f^q \in \rsI{p/1}. \qedhere
    \end{align*}
\end{proof}
  
%  By definition, for any $f \in R$, we have
 %   \begin{align*} 
  %  f \in \rsI{\frac{p}{q}} & \Longleftrightarrow f \in \rI{\frac{p}{q}}R_\wp \ \forall \ \wp \in \Ass(I) \\
  %  & \Longleftrightarrow f \cdot g_\wp \in \rI{\frac{p}{q}}  \text{ with some } g_\wp \not\in \wp, \ \forall \ \wp \in \Ass(I) \\
  %  & \stackrel{\text{def}}{\Longleftrightarrow} f^q \cdot g_\wp^q \in \overline{I^p} \text{ with some } g_\wp \not\in \wp, \ \forall \ \wp \in \Ass(I).
  %  \end{align*}
%Observe that, if for all $\wp \in \Ass(I)$, there exists $g_\wp \not\in \wp$ so that $f^q \cdot g_\wp^q \in \rI{p}$, then $f^q \in \rI{p} R_\wp$ for all $\wp \in \Ass(I)$. This implies that 
%$$f^q \in \bigcap_{\wp \in \Ass(I)} (\rI{p}R_\wp \cap R) = \rsI{\frac{p}{1}}.$$

%On the other hand, if $f^q \in \rsI{\frac{p}{1}}$ then $f^q \in \rI{p}R_\wp$ for any $\wp \in \Ass(I)$. That is, there exists $h_\wp \not\in \wp$ so that $f^q \cdot h_\wp \in \rI{p}$. It follows that $f^q \cdot h_\wp^q \in \rI{p}$ and, hence, $f \cdot h_\wp \in \rI{\frac{p}{q}}$ for any $\wp \in \Ass(I)$. In other words, $f \in \rI{\frac{p}{q}}R_\wp$ for any $\wp \in \Ass(I)$. Equivalently, we have $f \in \rsI{\frac{p}{q}}$ as desired.

We shall now focus on ideals $I$ whose associated rings ${\rm gr}_{I_\pp}(R_\pp)$ are reduced for all $\pp\in \Ass(I)$ and investigate how their rational symbolic powers behave. 
Our first result in this direction is a concrete description of elements belonging to rational symbolic powers. To achieve it, we will need the following remark and some auxiliary lemmas.

\begin{remark}
    \label{SH.Ex5.7}
    Let $I\subseteq R$. By \cite[Exercise 5.7]{sh}, if the associate graded ring $\gr_I(R)=\bigoplus_{n\geq 0}I^n/I^{n+1}$ is reduced then $I$ is a {\em normal} ideal, i.e. $I^n$ is integrally closed for all $n \ge 1$. %see also \cite[Exercise 10.20(i)]{sh}
\end{remark}

\begin{lemma}
    \label{lem.intSymb}
    Let $I \subseteq R$ be an ideal and assume that the associated graded ring $\gr_{I_\pp}(R_\pp)$ is reduced for any $\pp \in \Ass(I)$. 
    Then, for any $n \in \ZZ_+$, $I^{(n)}$ is integrally closed.
\end{lemma}

\begin{proof}
By  \Cref{SH.Ex5.7}, $I_{\p_i}^n= (IR_{\pp_i})^n$ is integrally closed for all $i=1,\ldots,s$.  Since a finite intersection of integrally closed ideals is integrally closed and the contraction of an integrally closed ideal is integrally closed, then
    $I^{(n)} = \bigcap_{i=1}^s (I^nR_{\pp_i} \cap R)$
    is integrally closed.
\end{proof}
%    Since a finite intersection of integrally closed ideals is integrally closed and the contraction of an integrally closed ideal is integrally closed, it suffices to show that $I^n R_{\pp_i}$ is integrally closed for any $i=1, \dots, s$. That is, $(IR_{\pp_i})^n=(I_{\p_i}^n)$ is an integrally closed ideal in $R_{\pp_i}$ for any $i =1, \dots, s$. 
% follows from % result is now clear by the assumption that $\gr_{I_{\pp_i}}(R_{\pp_i})$ is reduced, following, for instance, from
%    \Cref{SH.Ex5.7}. 

\begin{lemma} \label{lem.IntRatSym}
Let $I \subseteq R$ be an ideal and assume that the associated graded ring $\gr_{I_\pp}(R_\pp)$ is reduced for any $\pp \in \Ass(I)$. 
Then, for any $n \in \ZZ_+$,% the $n$-th rational symbolic power of $I$ coincides with $I^{(n)}=\overline{I^{(n)}}$. %, i.e. $\rsI{n/1}=I^{(n)}$.
$$\rsI{n/1}=I^{(n)}=\overline{I^{(n)}}.$$
\end{lemma}

\begin{proof} By definition, \cite[Proposition 2.4]{bisui2024rational} and \Cref{lem.intSymb}, the $n$-th rational symbolic power of $I$ is given by $$\bigcap_{\pp \in \Ass(I)} (\overline{I^n} R_{\pp} \cap R) = \bigcap_{\pp \in \Ass(I)} (\overline{(IR_{\pp})^n} \cap R) = \bigcap_{\pp \in \Ass(I)} ((IR_{\pp_i})^n \cap R) = \bigcap_{\pp \in \Ass(I)} (I^n R_{\pp} \cap R).$$
    This is exactly the same as $I^{(n)}$. The rightmost equality follows by \Cref{lem.intSymb}.  
\end{proof}

% \ref{lem.intSymb} and \ref{lem.IntRatSym} say that under the hypothesis that the associated graded ring $\gr_{I_\pp}(R_\pp)$ is reduced for any $\pp \in \Ass(I)$, there is no confusion in writing $\rsI{p}$, for $p \in \ZZ_+$, as this can be interpreted either as the $p$-th rational symbolic power of $I$ or as the integral closure of the $p$-th symbolic power of $I$. In this setting, 
We now collect a couple of membership criteria for rational symbolic powers when ${\rm gr}_{I_\pp}(R_\pp)$ is reduced for all $\pp\in \Ass(I)$.

\begin{theorem}
    \label{thm.equivDef}
    Let $I \subseteq R$ be an ideal. Suppose that the associated graded ring $\gr_{I_\pp}(R_\pp)$ is reduced for any $\pp \in \Ass(I)$. 
    Then, for any positive rational number $u = \frac{p}{q}$, we have
    $$\rsI{u} = \left\{ f \in R ~\big|~ f^q \in \rsI{\frac{p}{1}} \right\}=\left\{ f \in R ~\big|~ f^q \in I^{(p)} \right\}=\overline{(I^{(p)})^{1/q}}.$$
\end{theorem}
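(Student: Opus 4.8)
The plan is to prove the chain of equalities by reducing everything, via the primary decomposition in \Cref{primdecomp} and \Cref{lem.intSymb}, to the case of a prime ideal, where \Cref{def.RationalPower} applies directly. First I would record the key observation that since $I$ is radical, \Cref{lem.intSymb} gives $I^{(p)} = \overline{I^{(p)}}$ for every $p\in\NN$, and moreover $\overline{I^{(p)}}R_\pp\cap R = I^{(p)}$ for each $\pp\in\Ass(I)$ by the same local-property argument; combined with \Cref{primdecomp} this shows $\rsI{p}=I^{(p)}$ when $p$ is an integer, so the second and third displayed sets coincide. Thus it suffices to establish $\rsI{u}=\{x\in R\mid x^q\in \rsI{p}\}$, and the last equality $\{x\mid x^q\in I^{(p)}\}=\overline{(I^{(p)})^{1/q}}$ is then just \Cref{def.RationalPower} applied to the (radical, hence integrally closed) ideal $I^{(p)}$ — one should note that $(I^{(p)})^{1/q}$ in the sense of \Cref{def.RationalPower} makes sense because rational powers are taken of arbitrary ideals.

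For the main equality, I would argue primary component by primary component. Write $\Ass(I)=\Min(I)=\{\pp_1,\dots,\pp_n\}$ (radical ideals have no embedded primes) with $\pp_i$-primary components $Q_i=\pp_i$, so $I=\bigcap_i \pp_i$. By \Cref{primdecomp}, $\rsI{u}=\bigcap_i \overline{\pp_i^{(u)}}$ and $\rsI{p}=\bigcap_i \overline{\pp_i^{(p)}}$ for the integer $p$. Since the operation $x\mapsto x^q$ and the formation of intersections interact well — $x^q\in\bigcap_i J_i \iff x^q\in J_i$ for all $i$ — it is enough to prove, for each fixed prime $\pp=\pp_i$, that
\[
\overline{\pp^{(u)}} = \left\{x\in R \mid x^q\in \overline{\pp^{(p)}}\right\}.
\]
Now $\overline{\pp^{(u)}} = \overline{\pp^u}R_\pp\cap R$ by definition (the intersection over $\Ass(\pp)=\{\pp\}$ is a single term), and similarly $\overline{\pp^{(p)}}=\overline{\pp^p}R_\pp\cap R = \pp^{(p)}$. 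So after localizing at $\pp$ — using that rational powers commute with localization, \Cref{rmk.RSP}(4) — the claim becomes the statement that $x\in\overline{\pp^u}R_\pp$ iff $x^q\in\overline{\pp^p}R_\pp$, together with a contraction argument: $x\in R$ lies in $\overline{\pp^u}R_\pp\cap R$ iff its image lies in $\overline{\pp^u}R_\pp$, iff $x^q$ maps into $\overline{\pp^p}R_\pp$ (by \Cref{def.RationalPower} over the local ring $R_\pp$), iff $x^q\in\overline{\pp^p}R_\pp\cap R=\overline{\pp^{(p)}}$. The one subtlety is the last "iff": $x^q\in\overline{\pp^p}R_\pp$ certainly follows from $x^q\in\overline{\pp^{(p)}}$, and conversely if $x^q\in\overline{\pp^p}R_\pp$ then since $x^q\in R$ we get $x^q\in\overline{\pp^p}R_\pp\cap R=\overline{\pp^{(p)}}$ — so this direction is immediate, and symmetrically for the other, making the equivalence clean.

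The main obstacle I anticipate is purely bookkeeping: making sure that the definition of $\overline{I^{(u)}}$ (intersection over $\Ass(I)$ of contractions of $\overline{I^u}R_\pp$) genuinely reduces, for radical $I$, to the intersection over the minimal primes of the individual $\overline{\pp^{(u)}}$, i.e.\ checking there is no discrepancy between $\overline{I^u}R_{\pp_i}\cap R$ and $\overline{Q_i^{u}}R_{\pp_i}\cap R=\overline{\pp_i^{(u)}}$ — but this is exactly the content of \Cref{primdecomp}, which I may invoke. Beyond that, one must be slightly careful that $\{x\in R\mid x^q\in I^{(p)}\}$ is the honest set-theoretic description and matches the ideal $\overline{(I^{(p)})^{1/q}}$ defined by \Cref{def.RationalPower}; but since $I^{(p)}$ is integrally closed (\Cref{lem.intSymb}), \Cref{rmk.RSP}(3) and the cited results of \cite{sh} identify these without further work. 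No genuinely new technical difficulty arises; the content is the faithful translation between the global "symbolic" intersection and the local "rational power" definition.
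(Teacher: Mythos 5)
Your proposal is correct and is essentially the same argument as the paper's: both hinge on the fact that rational powers commute with localization (your \Cref{rmk.RSP}(4), the paper's citation of \cite[Proposition 2.4]{bisui2024rational}), followed by applying \Cref{def.RationalPower} inside each local ring $R_\pp$ and contracting back, with \Cref{lem.intSymb} supplying the identification $\rsI{p}=I^{(p)}$ for integer $p$. Your reorganization via \Cref{primdecomp} into a chain of equivalences, one prime component at a time, is a cosmetic repackaging of the paper's two-inclusion argument rather than a genuinely different route.
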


\begin{proof}
    The first equality follows from \Cref{thm.MembershipMonIdeal}. The second equality follows from \Cref{lem.IntRatSym}, and the third equality is obtained by the definition of rational symbolic powers.
\end{proof}

\Cref{Ex.u=1} illustrates that, without the assumption on the associated graded rings, the conclusion of \Cref{thm.equivDef} may not hold. 
It would be desirable to characterize classes of ideals for which the conclusion of Theorem \ref{thm.equivDef} holds.

%\begin{remark}
 %   \label{rmk.memberCondition}
 %   The hypotheses of \Cref{thm.equivDef} are satisfied for several classes of rings and ideals. For instance, 
%    \begin{enumerate}
%        \item $R$ is a regular domains; 
%        \item $R$ is a domain which is complete intersection on the punctured spectrum (e.g., $R$ is a domain with isolated singularity); and
%        \item $R$ is a domain and $V(I)$ in $\Spec R$ is the union of excellent normal surface singularities.
%    \end{enumerate}
%\end{remark}

Our next main result gives a scenario where rational symbolic powers can be realized as usual symbolic powers (with integer exponents).

\begin{theorem}\label[theorem]{thm.RatSymPower=SymPower}
Let $I \subseteq R$ be an ideal and let $u = \frac{p}{q} \in \QQ_+$. Assume that the following conditions holds:
\begin{enumerate}[(i)]
%\item $I^{(p)}$ is integrally closed;
\item $\Ass(I) = \Min(I)$, and 
\item $\gr_{I_\pp}(R_\pp)$ is a reduced ring for every $\pp \in \Ass(I)$.
\end{enumerate}
Then, 
$$\rsI{u} = I^{(\ceil{u})}.$$
\end{theorem}

\begin{proof} %By \Cref{lem.intSymb}, we have that $I^{(p)}$ is integrally closed. 
By \Cref{thm.equivDef}, we have that
    $$\rsI{\frac{p}{q}} = \{f \in R ~\big|~ f^q \in I^{(p)}\}.$$
    Set $a = \ceil{u} \in \ZZ_+$. Clearly, since $aq \ge p$, if $f \in I^{(a)}$ then we obtain $f^q \in \left(I^{(a)}\right)^q \subseteq I^{(aq)} \subseteq I^{(p)}$, which yields $$\rsI{\frac{p}{q}} \supseteq I^{(a)}.$$ 

    On the other hand, since $\Ass(I) = \Min(I)$, i.e., $I$ has no embedded primes, the definition of symbolic power yields  $\Ass(I) = \Ass(I^{(m)})$ for any $m \in \ZZ_+$. Moreover, by \Cref{thm.equivDef}, for any $f \in \rsI{\frac{p}{q}}$ we have $f^q \in I^{(p)}$. Thus, for any $\pp \in \Ass(I)$, we obtain 
    $f^q/1\in \left(I^{(p)}\right)_\pp = \left(I_\pp\right)^{(p)} = \left(I_\pp\right)^p$. 

    Set $c = \text{ord}_{I_\pp}(f)$ to be the $I_\pp$-adic evaluation of $f$; that is, the largest power $d$ such that $f \in (I_\pp)^d$. It then follows that 
    $$p \le \text{ord}_{I_\pp}(f^q) = q \cdot \text{ord}_{I_\pp}(f) = qc,$$
    where the left-most equality holds since $\gr_{I_\pp}(R_\pp)$ is reduced. Therefore, $c \ge \frac{p}{q}$. Since $c \in \ZZ$, we get $c \ge \ceil{u} = a$. Hence, 
    $$f \in \left(I_\pp\right)^a = \left(I_\pp\right)^{(a)} = \left(I^{(a)}\right)_\pp.$$
    As this inclusion holds for every $\pp \in \Ass(I) = \Ass(I^{(a)})$, we conclude that $f \in I^{(a)}$. This proves  $\rsI{\frac{p}{q}} \subseteq I^{(a)}$ and, therefore, $\rsI{\frac{p}{q}} = I^{(a)}$.
\end{proof}

\begin{remark} \label[remark]{rmk.Conditions}
The hypotheses of \Cref{thm.equivDef} and \Cref{thm.RatSymPower=SymPower} hold for several classes of rings and ideals. For instance, when
\begin{itemize}
    \item $R$ is a regular domains and $I$ is a radical ideal --- in this case, $R_\pp$ is a regular local ring with maximal ideal $I_\pp = \pp R_\pp$ for $\pp \in \Ass(I)$;
    \item $R$ is complete intersection in codimension $\ge$ big-height of $I$ --- for instance, $R$ is a domain with an isolated singularity --- and $I$ is a radical ideal; 
    \item $R$ is an excellent normal domain and $I$ is a radical ideal defining surface rational singularities --- in this case, according to \cite{Cut}, $I_\pp = \pp R_\pp$ is a normal ideal; 
    \item more generally, $I$ is a radical ideal such that $\Spec R_\pp$ has a reduced tangent cone for every $\pp \in \Ass(I)$.
\end{itemize}
\end{remark}

%%%%%%%%%%%%%%%%%%%%%%%%%%%%%%%%%%%%%%%%%%%
%%%%%%%%%%%%%%%%%%%%%%%%%%%%%%%%%%%%%%%%%%%

\section{Rational symbolic powers, saturated powers and binomial expansions} \label{sec.sat}

In this section, we shall see that rational symbolic powers are saturation of rational powers. This gives rise to an interesting consequence that (sufficiently large) rational symbolic powers of homogeneous ideals in polynomial rings over a field of characteristic 0 are strongly Golod. We will also show that if the binomial expansion formula holds for a rational power of a mixed sum of ideals then it also holds for the corresponding rational symbolic powers.

We start by defining rational saturated power of an ideal with respect to another ideal. %Let $A$ denote a Noetherian commutative ring.

\begin{definition}
    \label{def.satPower}
    Let $I, K \subseteq R$ be nonzero proper ideals, and let $u \in \QQ_+$ be a positive rational number. The $u$-th \emph{saturated power} of $I$ with respect to $K$ is given by
$$\overline{I^{(u)}_K} = \rI{u} : K^\infty.$$
\end{definition}

\begin{remark} \label{rmk.satPower}
If $\rI{u} = \bigcap_{\pp \in \Ass_R(\rI{u})} Q(\pp)$ is an irredundant primary decomposition of $\rI{u}$, where $Q(\pp)$ is the $\pp$-primary component of $\rI{u}$, then
\begin{equation}
\label{eq_satpower_andprimarydecomp}
\overline{I^{(u)}_K} = \bigcap_{\pp \in \Ass_R(\rI{u}), \  K \not\subseteq \pp} Q(\pp). 
\end{equation}
\end{remark}

Note that $\bigcup_{\pp \in \Ass_R(I)} \pp$ consists of all zero-divisors of $R/I$. By prime avoidance, it is easy to see that for $y \not\in \bigcup_{\pp \in \Ass_R(I)} \pp$, we have
\begin{align} \label{eq.colon}
	\overline{I^u} : y \subseteq \ \overline{I^{(u)}} = \bigcap_{\pp \in \Ass_R(I)} (\overline{I^u}R_\pp \cap R).
\end{align}

\begin{lemma}
    \label{lem.finiteAss}
    Let $I \subseteq R$ be an ideal. Then,
$\Ass_R^*(I) := \bigcup_{u \in \QQ_+ }\Ass_R(\rI{u})$ 
is a finite set.
\end{lemma}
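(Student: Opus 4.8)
The plan is to reduce the claim to the corresponding, classically known, fact for ordinary integral powers. First, by \cite[Proposition 10.5.5]{sh} (see Remark~\ref{rmk.RSP}(2)) fix $e \in \NN$ such that every rational power of $I$ is of the form $\rI{k/e}$ with $k \in \NN$; hence $\Ass^*(I) = \bigcup_{k \ge 1}\Ass_A(A/\rI{k/e})$. Using the presentation $u = k/e$ (legitimate by \cite[Proposition 10.5.2]{sh}), Definition~\ref{def.RationalPower} gives $\rI{k/e} = \{\, x \in A \mid x^e \in \rI{k}\,\}$. So it suffices to establish two things: (i) $\Ass_A(A/\rI{k/e}) \subseteq \Ass_A(A/\rI{k})$ for every $k \ge 1$, and (ii) $\bigcup_{k \ge 1}\Ass_A(A/\rI{k})$ is finite.

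For (i), fix $k$ and let $\pp \in \Ass_A(A/\rI{k/e})$. Since rational powers commute with localization (Remark~\ref{rmk.RSP}(4)) and associated primes are detected locally, we may replace $A$ by $A_\pp$ and assume $A$ is local with maximal ideal $\pp$ and $\pp \in \Ass_A(A/\rI{k/e})$; it then suffices to show $\pp \in \Ass_A(A/\rI{k})$. Pick $x \in A \setminus \rI{k/e}$ with $(\rI{k/e} : x) = \pp$. Then $x^e \notin \rI{k}$, so $\rI{k} : x^e$ is a proper ideal; and for every $a \in \pp$ we have $ax \in \rI{k/e}$, hence $a^e x^e \in \rI{k}$, i.e.\ $a^e \in \rI{k} : x^e$. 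Therefore $\pp \subseteq \sqrt{\rI{k} : x^e} \subsetneq A$, and maximality of $\pp$ forces $\rI{k} : x^e$ to be $\pp$-primary. Since multiplication by $x^e$ induces an embedding $A/(\rI{k}:x^e) \hookrightarrow A/\rI{k}$, we conclude $\{\pp\} = \Ass_A(A/(\rI{k}:x^e)) \subseteq \Ass_A(A/\rI{k})$.

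Statement (ii) is the classical finiteness of the set of asymptotic prime divisors of the integral closure filtration $\{\rI{k}\}_{k \ge 1}$ — indeed the sets $\Ass_A(A/\rI{k})$ stabilize for $k \gg 0$; see, e.g., \cite{sh} and the references therein. Combining (i) and (ii) yields $\Ass^*(I) \subseteq \bigcup_{k \ge 1}\Ass_A(A/\rI{k})$, which is finite.

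I expect the heart of the argument to be step (i): transferring an associated prime of the ``$e$-th root ideal'' $\rI{k/e}$ to one of $\rI{k}$. The key points there are that colon by a single element can only shrink the set of associated primes, and that the reduction to a local ring promotes the inclusion $\pp \subseteq \sqrt{\rI{k}:x^e}$ to the stronger statement that $\rI{k}:x^e$ is $\pp$-primary. (Alternatively, one could argue through the finitely many Rees valuations of $I$, writing both $\rI{k}$ and $\rI{k/e}$ via valuative inequalities, but the $e$-th root reduction above seems cleaner and avoids hypotheses on $A$ beyond Noetherianity.)
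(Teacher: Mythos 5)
Your proof is correct, and it takes a genuinely different route from the paper. The paper invokes Noetherianity of the Rees algebra $\bigoplus_{k}\rI{k/e}t^k$ (citing a result of Bisui et al.\ on rational powers) and then applies a general finiteness theorem for asymptotic prime divisors of Noetherian filtrations from Lewis's work. You instead reduce directly to the classical Ratliff--McAdam finiteness of $\bigcup_k \Ass_A(A/\overline{I^k})$ by proving the inclusion $\Ass_A(A/\rI{k/e})\subseteq \Ass_A(A/\rI{k})$ via the ``$e$-th root'' argument: localize at an associated prime $\pp$ of $\rI{k/e}$, pass from $x$ with $(\rI{k/e}:x)=\pp$ to $x^e$, and observe that $\pp\subseteq\sqrt{\rI{k}:x^e}$ forces $\rI{k}:x^e$ to be $\pp$-primary (radical equals the maximal ideal), so the embedding $A/(\rI{k}:x^e)\hookrightarrow A/\rI{k}$ places $\pp$ in $\Ass_A(A/\rI{k})$. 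All the steps check out: the localization reduction uses that rational powers commute with localization and that associated primes are detected locally, and the primary-ideal conclusion from a maximal radical is a standard fact. Your argument is more elementary and self-contained, needing only the classical stability of $\Ass_A(A/\overline{I^k})$ rather than machinery about graded families; it also exhibits the stronger, somewhat more informative, containment $\Ass^*(I)\subseteq\bigcup_{k}\Ass_A(A/\overline{I^k})$, rather than merely the finiteness of the left-hand side. The paper's route, on the other hand, plugs directly into a general framework already used elsewhere in the paper, so it comes at no extra cost there.
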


\begin{proof}
    It follows from \cite[Proposition 2.13]{bisui2024rational} that the Rees algebra $\bigoplus_{h \in \ZZ_+} \overline{I^{\frac{h}{e}}} t^h$ is Noetherian. The assertion now follows, for instance, from \cite[Proposition 4.2]{lewis} and Remark \ref{rmk.RSP}.
\end{proof}

Our next result realizes rational symbolic powers as rational saturated powers.

\begin{theorem}
	\label[theorem]{lem.symbSat_ass}
	Let $I \subseteq R$ be any ideal and let $u\in \QQ_+$. Set
	$$K = \bigcap_{\substack{\pp \in \Ass_R^*(I) \\ \grade(\pp, R/I) \ge 1}} \pp \qquad \text{ and }\qquad K_u = \bigcap_{\substack{\pp \in \Ass_R(\overline{I^u}) \\ \grade(\pp,R/I) \ge 1}} \pp.$$
	Let $x \in K$ be any element that is regular on $R/I$ if $\{\pp \in \Ass_R^*(I) \mid \grade(\pp,R/I)\ge 1\}\neq \emptyset$, and $x = 1$ otherwise. Then,
	$$\overline{I^{(u)}} = \overline{I^{(u)}_K} = \overline{I^{(u)}_{K_u}} = \overline{I^{(u)}_{(x)}}.$$
    In particular, an irredundant primary decomposition of $\overline{I^{(u)}}$ is obtained from an irredundant primary decomposition of $\overline{I^u}$ by intersecting only the components primary to ideals $\pp$ which are contained in some element of $\Ass_R(I)$.
\end{theorem}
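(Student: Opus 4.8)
The plan is to express each of the four ideals appearing in the statement as an intersection of primary components of $\overline{I^u}$, selected by an explicit condition on the associated primes, and then to check that the four selection conditions coincide. Fix once and for all an irredundant primary decomposition $\overline{I^u}=\bigcap_{\pp\in\Ass_A(\overline{I^u})}Q(\pp)$; here $\Ass_A(\overline{I^u})$ is finite and contained in $\Ass_A^*(I)$ (finite by Lemma~\ref{lem.finiteAss}), and every $\pp$ occurring contains $I$ (one checks $I^{\lceil u\rceil}\subseteq\overline{I^u}$, so $\pp\supseteq\overline{I^u}$ forces $\pp\supseteq\sqrt I\supseteq I$). The first step, which I expect to be the crux, is to pin down $\overline{I^{(u)}}$ among these components in a way independent of the chosen decomposition; the rest is prime-avoidance bookkeeping.

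\emph{Step 1: the primary-component description of $\overline{I^{(u)}}$ (this yields the ``in particular'' clause).} Starting from $\overline{I^{(u)}}=\bigcap_{\pp'\in\Ass_A(I)}\bigl(\overline{I^u}A_{\pp'}\cap A\bigr)$, I would invoke the standard compatibility of an irredundant primary decomposition with localization at a prime: for any prime $\pp'$ one has $Q(\pp)A_{\pp'}=A_{\pp'}$ when $\pp\not\subseteq\pp'$, and $Q(\pp)A_{\pp'}\cap A=Q(\pp)$ when $\pp\subseteq\pp'$, so that $\overline{I^u}A_{\pp'}\cap A=\bigcap\{Q(\pp):\pp\in\Ass_A(\overline{I^u}),\ \pp\subseteq\pp'\}$. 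Intersecting over $\pp'\in\Ass_A(I)$ gives
\[
\overline{I^{(u)}}=\bigcap\bigl\{\,Q(\pp):\pp\in\Ass_A(\overline{I^u}),\ \pp\subseteq\pp'\text{ for some }\pp'\in\Ass_A(I)\,\bigr\}.
\]
I then use the elementary fact that, for a prime $\pp$, $\grade(\pp,A/I)=0$ if and only if $\pp$ is contained in some $\pp'\in\Ass_A(I)$: the zero-divisors on $A/I$ form the union $\bigcup_{\pp'\in\Ass_A(I)}\pp'$, so prime avoidance (with $\Ass_A(I)$ finite) yields the claim. Hence the selection condition above is exactly $\grade(\pp,A/I)=0$, which is the ``in particular'' assertion; from now on write $\overline{I^{(u)}}=\bigcap\{Q(\pp):\grade(\pp,A/I)=0\}$. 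Note the equality holds for \emph{every} irredundant primary decomposition, because the intermediate ideals $\overline{I^u}A_{\pp'}\cap A$ are intrinsic.

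\emph{Step 2: matching the saturated powers.} By Remark~\ref{rmk.satPower}, $\overline{I^{(u)}_K}=\bigcap\{Q(\pp):\pp\in\Ass_A(\overline{I^u}),\ K\not\subseteq\pp\}$, and likewise for $K_u$ and $(x)$. In view of Step 1 it suffices to verify, for each $\pp\in\Ass_A(\overline{I^u})$, the equivalences
\[
K\not\subseteq\pp \iff \grade(\pp,A/I)=0 \iff K_u\not\subseteq\pp \iff x\notin\pp.
\]
For $K$: if $\grade(\pp,A/I)\ge 1$ then $\pp$ is itself one of the primes intersected to form $K$ (it lies in $\Ass_A^*(I)$ and has positive grade), so $K\subseteq\pp$; conversely if $K\subseteq\pp$ then, unless the index set is empty (in which case $K=A\not\subseteq\pp$), some prime $\qq$ with $\grade(\qq,A/I)\ge 1$ satisfies $\qq\subseteq\pp$, and a nonzerodivisor of $A/I$ lying in $\qq$ also lies in $\pp$, so $\grade(\pp,A/I)\ge 1$. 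The same argument applies verbatim to $K_u$, whose defining primes are the $\pp\in\Ass_A(\overline{I^u})$ with positive grade. For $x$: if $\grade(\pp,A/I)=0$ then $\pp\subseteq\pp'$ for some $\pp'\in\Ass_A(I)$, and since $x$ is a nonzerodivisor on $A/I$ (or $x=1$) we get $x\notin\pp'\supseteq\pp$; if $\grade(\pp,A/I)\ge 1$ then the index set for $K$ is nonempty, so $x\in K\subseteq\pp$ by the $K$-case. Combining, all four ideals equal $\bigcap\{Q(\pp):\pp\in\Ass_A(\overline{I^u}),\ \grade(\pp,A/I)=0\}$, which is the asserted chain of equalities. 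For well-posedness one should also record that an $x$ as in the statement exists: by the grade fact from Step 1, $K\not\subseteq\pp'$ for every $\pp'\in\Ass_A(I)$, so prime avoidance places some element of $K$ outside the zero-divisors of $A/I$.

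The only genuinely delicate point is Step 1: asserting that a prescribed intersection of primary components of $\overline{I^u}$ recovers $\overline{I^{(u)}}$ regardless of the (non-unique, at embedded primes) decomposition. This is exactly what the compatibility of irredundant primary decomposition with localization at a prime provides, and everything afterwards is formal. One should also check the boundary cases in which $K$ or $K_u$ equals the unit ideal (empty intersection) or $x=1$; in each of these $\Ass_A(\overline{I^u})$ contains no prime of positive grade on $A/I$, and the statement degenerates to $\overline{I^{(u)}}=\overline{I^u}$, which is consistent with Step 1.
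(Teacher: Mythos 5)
Your proof is correct, but it reaches the conclusion by a different route than the paper's. The paper argues by a cycle of containments $\overline{I^{(u)}_{K_u}}\subseteq\overline{I^{(u)}_{K}}\subseteq\overline{I^{(u)}_{(x)}}\subseteq\overline{I^{(u)}}\subseteq\overline{I^{(u)}_{K_u}}$: the easy containments follow from $(x)\subseteq K\subseteq K_u$; $\overline{I^{(u)}_{(x)}}\subseteq\overline{I^{(u)}}$ follows from the colon observation in~\eqref{eq.colon}; and the last containment is proved by taking $N\in\overline{I^{(u)}}$, showing $\overline{I^u}\colon N\not\subseteq\bigcup_{\pp\in\Ass_A(I)}\pp$, deducing that every $\pp\in\Ass_A(A/(\overline{I^u}\colon N))$ has $\grade(\pp,A/I)\ge 1$, and then using that some power of $K_u$ sits inside $\overline{I^u}\colon N$. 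The ``in particular'' clause is read off afterwards from $\overline{I^{(u)}}=\overline{I^{(u)}_{K_u}}$ together with Remark~\ref{rmk.satPower}. You instead prove the ``in particular'' statement first, by contracting an irredundant primary decomposition of $\overline{I^u}$ along each localization $A\to A_{\pp'}$, $\pp'\in\Ass_A(I)$, and identifying the surviving components as exactly those with $\grade(\pp,A/I)=0$; then all four ideals are matched against this single primary-component description via Remark~\ref{rmk.satPower} and the elementary equivalences $K\not\subseteq\pp\iff K_u\not\subseteq\pp\iff x\notin\pp\iff\grade(\pp,A/I)=0$. Both arguments are valid and of similar length; yours is more structural (a single ``normal form'' that all four ideals are compared against), makes the decomposition-independence explicit, and isolates the well-posedness of the choice of $x$ at the end, whereas the paper's cycle-of-inclusions proof is more elementary, using only colon arithmetic, prime avoidance, and the Noetherian finiteness of $\sqrt{\overline{I^u}\colon N}$.
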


\begin{proof}
If $x=1$, then $\overline{I^{(u)}_{(x)}}=\rsI{u}$. Since $K \subseteq K_u$, we have  $\overline{I^{(u)}_{K_u}} \subseteq \overline{I^{(u)}_K} \subseteq \rsI{u}= \overline{I^{(u)}_{(x)}}$. In the other case, since $(x)\subseteq K \subseteq K_u$, we have  $\overline{I^{(u)}_{K_u}} \subseteq \overline{I^{(u)}_K} \subseteq \overline{I^{(u)}_{(x)}}$. Thus, in either case one has
$$\overline{I^{(u)}_{K_u}} \subseteq \overline{I^{(u)}_K} \subseteq \overline{I^{(u)}_{(x)}},$$
and to establish the three equalities in the statement it suffices to prove the following inclusions:
	$$\overline{I^{(u)}_{(x)}} \subseteq \ \overline{I^{(u)}} \subseteq \overline{I^{(u)}_{K_u}}.$$
	
First, let $f \in \overline{I^{(u)}_{(x)}}$, then $f\in \overline{I^u} : x^h$ for some $h\in \ZZ_+$. This, together with (\ref{eq.colon}), implies that $f \in \ \overline{I^{(u)}}$ as $x^h \notin \bigcup_{\pp \in \Ass_R(I)} \pp.$ Therefore, $\overline{I^{(u)}_{(x)}} \subseteq \ \overline{I^{(u)}}$.

To establish the second inclusion, consider any element $g \in \ \overline{I^{(u)}} \setminus \overline{I^u}$, if it exists. By the definition and prime avoidance, we have $\overline{I^u} : g \not \subseteq \bigcup_{\pp \in \Ass_R(I)} \pp.$ Combining it with the inclusion $\Ass_R(\overline{I^u} :g) \subseteq \Ass_R(\overline{I^u})$ yields
$$\Ass_R(\overline{I^u} :g) \subseteq \{\pp ~\big|~ \pp \in \Ass_R(\overline{I^u}) \text{ and } \grade(\pp, R/I) \ge 1\}.$$

By Noetherianess of $R$, %Since $(\overline{I^u} : g)$ is finitely generated, 
there exists $q\in \ZZ_+$ with $\left(\bigcap_{\pp \in \Ass_R(\overline{I^u}:g)} \pp\right)^q\subseteq (\overline{I^u}:g)$.  Thus,
	$$(K_u)^q =  \left(\mathop{\bigcap_{\pp \in \Ass_R(\overline{I^u}), \ \grade(\pp,R/I) \ge 1}} \pp\right)^q  \subseteq \left(\bigcap_{\pp \in \Ass_R(\overline{I^u}:g)} \pp\right)^q \subseteq \overline{I^u}: g.$$
This implies that  $g \in \overline{I^u} : K_u^{\infty}$. Hence, $\overline{I^{(u)}} \subseteq \overline{I^{(u)}_{K_u}}$.

Finally, the last assertion follows from the equality $\overline{I^{(u)}} = \overline{I^{(u)}_{K_u}}$ and Equation \eqref{eq_satpower_andprimarydecomp}.
\end{proof}

\Cref{lem.symbSat_ass} gives rise to the following interesting consequence. 
Following \cite[Section 2]{hunekegolod}, given a homogeneous ideal $I$ of $\kk[x_1,\ldots,x_d]$, where $\kk$ is a field of characteristic $0$, we say $I$ is \emph{strongly Golod} if $\partial(I)^2\subseteq I$, where $\partial(I)$ is the ideal  generated by the partial derivatives $\dfrac{\partial f}{\partial x_i}$ with $f\in I$ and $i=1,\ldots,n$. 

\begin{proposition}\label{golod}
Let $\kk$ be a field of characteristic $0$, and $I$ be a  homogeneous ideal of $\kk[x_1,\ldots,x_d]$. Then, $\overline{I^{(u)}}$ is strongly Golod for every $u\in \mathbb Q \cap [2,\infty)$.  
\end{proposition}

\begin{proof} By \Cref{lem.symbSat_ass}, setting $J$ to be the ideal generated by sufficiently large power of $(x)$, we get 
$$\overline{I^{(u)}} = \rI{u} : J=\rI{u} : J^2.$$ 
It now follows from \cite[Theorem 2.3(c)]{hunekegolod} and \cite[Corollary 4.4]{rationalgolod} that $\overline{I^{(u)}}$ is strongly Golod.  
\end{proof}

\begin{remark}\label{rmk.golod}
   If $I$ is strongly Golod, then with the same argument as in \Cref{golod}, using \cite[Corollary 4.6]{rationalgolod} and \cite[Theorem 2.3(c)]{hunekegolod}, we conclude that $\rsI{u}$ is strongly Golod for all $u \in \QQ \cap [1, \infty)$.
\end{remark}

We shall move on to investigate rational symbolic powers of mixed sums of ideals and binomial expansion formula.

\begin{notation}\label{not.A and B}
For the remaining of this section, let $A$ and $B$ be Noetherian $\kk$-algebras such that $R = A \otimes_\kk B$ is also Noetherian. 

For ideals $I\subseteq A$ and $J\subseteq B$, let $I+J$ denote the $R$-ideal $IR+JR$. We call this ideal the \emph{mixed sum} of $I$ and $J$. 
\end{notation}

\begin{definition}\label{def.bin}
Adopt \Cref{not.A and B}.    
Let $u \in \QQ_+$ be a positive rational number. We say that the \emph{binomial expansion formula} holds for $I$ and $J$ at power $u$ (cf. \cite{BH2023, bisui2024rational}) if
    $$\overline{(I+J)^u} = \sum_{\substack{0 \le w \le u \\ w \in \QQ}} \rI{w} \cdot \overline{J^{u-w}}.$$
\end{definition}

\begin{remark}\label{rmk.abusingNotation} 
By abusing notations, for an ideal $I \subseteq A$ (or $J \subseteq B$), we shall also use $I$ (respectively, $J$) to denote its extension to $R$. Particularly, summands on the right hand side of \Cref{def.bin} should be understood as $\rI{w}R \cdot \overline{J^{u-w}}R$.
\end{remark}

\begin{example}
    \label{ex.binExpRat}
    By \cite[Corollary 4.9]{bisui2024rational}, it is known that if $I$ and $J$ belong to any of the following classes of ideals, not necessarily both from the same class, then the binomial expansion formula holds for $I$ and $J$ at {\em any} rational power $u$.
    \begin{enumerate}
        \item Monomial ideals in affine semigroup rings.
        \item Sums of products of determinantal ideals of a generic matrix.
        \item Sums of products of determinantal ideals of a generic symmetric matrix.
        \item Sums of product of ideals of Pfaffians of a skew-symmetric matrix.
        \item Products of determinantal ideals of a Hankel matrix of variables.
    \end{enumerate}
\end{example}

The following result shows that if the binomial expansion formula holds for $I$ and $J$ at a rational power $u$, then the corresponding binomial expansion formula for saturated powers of $I$ and $J$ is also valid.

\begin{theorem}\label{thm.binSat}
Let $I,K \subseteq A$ and $J,L \subseteq B$ be ideals. Let $u \in \QQ_+$. If the binomial expansion formula holds for $I$ and $J$ at power $u$, then
$$\overline{(I+J)^{(u)}_{KL}} = \sum_{\substack{0 \le w \le u \\ w \in \QQ}} \overline{I^{(w)}_K} \cdot \overline{J^{(u-w)}_L}.$$
\end{theorem}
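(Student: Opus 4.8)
The plan is to realize the $KL$-saturation as two successive saturations, one carried out on each tensor factor of $R = A \otimes_\kk B$, and to compute each of them by a $\kk$-linear ``staircase'' argument. First I would write $KL = (KR)(LR)$, so that $(KL)^n = (KR)^n (LR)^n$, and invoke the elementary identity $\aa : (\bb\cc)^\infty = (\aa : \bb^\infty) : \cc^\infty$ (valid in the Noetherian ring $R$, using that $\aa:\bb^n$ stabilizes) to get
\[
\overline{(I+J)^{(u)}_{KL}} = \overline{(I+J)^u} : (KL)^\infty = \bigl(\overline{(I+J)^u} : (KR)^\infty\bigr) : (LR)^\infty.
\]
Next, fix (as in Remark \ref{rmk.RSP}(2)) an integer $e$ that is valid for both $I$ and $J$ and with $m := ue \in \NN$. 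Since $\rI{\bullet}$ and $\overline{J^{\bullet}}$ are nonincreasing and take only finitely many values on $[0,u]$, the binomial expansion formula at power $u$ (together with routine bookkeeping about these step functions) rewrites $\overline{(I+J)^u}$ as the finite \emph{staircase} $\sum_{j=0}^{m} \rI{j/e}\, R \cdot \overline{J^{(m-j)/e}}\, R$, in which $\aa_j := \rI{j/e}$ is nonincreasing in $j$ and $\bb_j := \overline{J^{(m-j)/e}}$ is nondecreasing in $j$ with $\bb_m = B$.

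The heart of the proof is the following \emph{staircase lemma}: if $\aa_0 \supseteq \cdots \supseteq \aa_m$ are ideals of $A$ and $\bb_0 \subseteq \cdots \subseteq \bb_m = B$ are ideals of $B$, then
\[
\Bigl(\sum_{j=0}^{m} \aa_j R \cdot \bb_j R\Bigr) : (KR)^\infty = \sum_{j=0}^{m} (\aa_j : K^\infty)\, R \cdot \bb_j R .
\]
To prove this I would choose a $\kk$-basis $\{e_\alpha\}_{\alpha \in S}$ of $B$ adapted to the flag $\bb_0 \subseteq \cdots \subseteq \bb_m = B$, so that $\bb_j = \bigoplus_{\alpha \in S_j} \kk e_\alpha$ with $S_0 \subseteq \cdots \subseteq S_m = S$, and write $R = \bigoplus_\alpha A e_\alpha$ with unique coordinates. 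Setting $j(\alpha) = \min\{j : \alpha \in S_j\}$, nestedness of the $\aa_j$'s gives $\sum_j \aa_j R \cdot \bb_j R = \{\sum_\alpha a_\alpha e_\alpha : a_\alpha \in \aa_{j(\alpha)} \text{ for all } \alpha\}$. For $x = \sum_\alpha x_\alpha e_\alpha$ one then checks that $x (KR)^n \subseteq \sum_j \aa_j R \cdot \bb_j R$ if and only if $K^n x_\alpha \subseteq \aa_{j(\alpha)}$ for every $\alpha$: the forward direction follows by multiplying by the elements $k \in K^n$; the reverse uses that $\bb_{j(\alpha)}$ is an ideal (hence $e_\alpha B \subseteq \bb_{j(\alpha)}$), together with the implication $\beta \in S_{j(\alpha)} \Rightarrow j(\beta) \le j(\alpha) \Rightarrow \aa_{j(\alpha)} \subseteq \aa_{j(\beta)}$. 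Thus the left-hand side equals $\{\sum_\alpha x_\alpha e_\alpha : x_\alpha \in \aa_{j(\alpha)} : K^\infty\}$; since $\aa_j : K^\infty$ is again nonincreasing in $j$, applying the same description to these ideals identifies this set with $\sum_j (\aa_j : K^\infty)\, R \cdot \bb_j R$.

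I would then apply the lemma twice. The first application, with $\aa_j = \rI{j/e}$ and $\bb_j = \overline{J^{(m-j)/e}}$, gives $\overline{(I+J)^u} : (KR)^\infty = \sum_{j=0}^m \overline{I^{(j/e)}_K}\, R \cdot \overline{J^{(m-j)/e}}\, R$. Re-indexing this as $\sum_{j=0}^m \overline{J^{j/e}}\, R \cdot \overline{I^{((m-j)/e)}_K}\, R$ and applying the lemma again with the roles of $A$ and $B$ interchanged and $L$ in place of $K$ — valid because $\overline{J^{\bullet}}$ is nonincreasing, $\overline{I^{((m-j)/e)}_K} = \rI{(m-j)/e} : K^\infty$ is nondecreasing in $j$, and its top term $\overline{I^{(0)}_K} = A : K^\infty = A$ is the ambient ring of the new ``$B$-side'' — yields $\bigl(\overline{(I+J)^u} : (KR)^\infty\bigr) : (LR)^\infty = \sum_{j=0}^m \overline{I^{(j/e)}_K}\, R \cdot \overline{J^{((m-j)/e)}_L}\, R$. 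Combining with the reduction step and re-expanding the finite sum into the $\QQ$-indexed sum of the statement finishes the proof.

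I expect the staircase lemma to be the main obstacle: the substantive point is recognizing that the ideal property of the $\bb_j$'s, together with their nestedness, is exactly what forces the $K$-saturation to affect only the $A$-factors, so that the two saturations decouple across the tensor factors. The only other care needed is in the passage between the $\QQ$-indexed sums and the finite truncations; here one must note that $\rI{w}$ and $\overline{J^{u-w}}$ round $w$ in opposite directions, so that each summand $\overline{I^{(w)}_K} \cdot \overline{J^{(u-w)}_L}$ (and likewise $\rI{w} \cdot \overline{J^{u-w}}$) is contained in one whose index is a multiple of $1/e$.
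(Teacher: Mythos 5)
Your proof is correct and follows essentially the same strategy as the paper's: split the $(KL)$-saturation into successive $K$- and $L$-saturations and push each one through the tensor-product decomposition of $R = A \otimes_\kk B$, using the ``staircase'' structure of the binomial-expansion sum to see that the colon only affects the relevant tensor factor. The one genuine difference is that the paper cites \cite[Lemmas 3.4--3.6]{HJKN23} for the key commutation of colons across the tensor product (and then takes $t \to \infty$), whereas you reprove that step from scratch as a saturation-level ``staircase lemma'' via a $\kk$-basis of $B$ adapted to the flag $\bb_0 \subseteq \cdots \subseteq \bb_m = B$ --- a tidy self-contained variant of the same argument that also delivers both inclusions at once, rather than handling the reverse containment separately as the paper does.
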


\begin{proof}
    The proof follows essentially from the same argument as that of \cite[Theorem 3.7]{HJKN23}. First, choose an $e$ such that $u = \frac{s}{e}$, for some $s \in \ZZ_+$, and rational powers and symbolic rational powers of $I, J$ and $(I+J)$ can be realized as powers with the common denominator $e$. (see \Cref{not.n/1}.) 
    
    Notice also that \cite[Lemmas 3.4--3.6]{HJKN23} apply to the filtrations $\left\{\rI{\frac{i}{e}}\right\}_{i \in \ZZ_+}$ and $\left\{\overline{J^{\frac{j}{e}}}\right\}_{j \in \ZZ_+}$. Thus, by making use of \cite[Lemma 3.6]{HJKN23} and the binomial expansion (for rational powers) of $I$ and $J$ at $u$, for any $n\in \ZZ_+$ we get the inclusion
    \begin{align*}
        \overline{(I+J)^u} :_R (KL)^n &  = \left(\overline{(I+J)^u} :_R K^n\right) :_R L^n = \left(\sum_{i=0}^s \rI{\frac{i}{e}} \cdot \overline{J^{\frac{s-i}{e}}} :_R K^n\right) :_R L^n\\
        & = \left(\sum_{i=0}^s (\rI{\frac{i}{e}} :_A K^n) \overline{J^{\frac{s-i}{e}}}\right) :_R L^n = \sum_{i=0}^s \left(\rI{\frac{i}{e}} :_A K^n\right) \left(\overline{J^{\frac{s-i}{e}}} :_B L^n\right) \\
        &\subseteq \sum_{i=0}^s \overline{I^{(\frac{i}{e})}_K} \cdot \overline{J^{(\frac{s-i}{e})}_L}.
    \end{align*}
    This inclusion is true for any $n \in \ZZ_+$. Therefore, 
    $$\overline{(I+J)^{(u)}_{KL}} \subseteq \sum_{i=0}^s \overline{I^{(\frac{i}{e})}_K} \cdot \overline{J^{(\frac{s-i}{e})}_L} = \sum_{\substack{0 \le w \le u \\ w \in \QQ}} \overline{I^{(w)}_K} \cdot \overline{J^{(u-w)}_L}.$$

    On the other hand, for every $i \ge 0$, the inclusions $I \subseteq I+J$ and $KL \subseteq K$, yield
    $$(\rI{\frac{i}{e}} :_A K^\infty)R = \rI{\frac{i}{e}} :_R K^\infty \subseteq \overline{(I+J)^{\frac{i}{e}}} :_R (KL)^\infty.$$
    Similarly, for $j \ge 0$, we have
    $$(\overline{J^{\frac{j}{e}}} :_B L^\infty)R \subseteq \overline{(I+J)^{\frac{j}{e}}} :_R (KL)^\infty.$$
    It follows that, for any $i,j \ge 0$,
    $$\overline{I^{(\frac{i}{e})}_K} \cdot \overline{J^{(\frac{j}{e})}_L} \subseteq \left(\overline{(I+J)^{\frac{i}{e}}} :_R (KL)^\infty\right) \cdot \left(\overline{(I+J)^{\frac{j}{e}}}:_R (KL)^\infty\right) \subseteq \overline{(I+J)^{\frac{i+j}{e}}} :_R (KL)^\infty.$$
    Particularly, we obtain the reverse inclusion
    $$\sum_{\substack{0 \le w \le u \\ w \in \QQ}} \overline{I^{(w)}_K} \cdot \overline{J^{(u-w)}_L} =  \sum_{i=0}^s \overline{I^{(\frac{i}{e})}_K} \cdot \overline{J^{(\frac{s-i}{e})}_L} \subseteq \overline{(I+J)^{(u)}_{KL}}.$$
%    The result is proved.

\end{proof}

We shall now use this result to further establish the binomial expansion for rational symbolic powers of mixed sums of ideals.

\begin{lemma}
    \label{lem.AssQ}
    Let $\{I_i\}_{i \in \ZZ_+} \subseteq R$ and $\{J_j\}_{j \in \ZZ_+} \subseteq B$ be filtrations of ideals in $A$ and $B$. For $n \ge 1$, set
$$Q_n := \sum_{\substack{i+j = n \\ 0 \le i,j \in \ZZ}} I_iJ_j \subseteq R.$$
    Then,
        $$\Ass_R(R/Q_n) \subseteq \bigcup_{1 \le i,j \le n} \bigcup_{\substack{\pp \in \Ass_A(I_i) \\ \qq \in \Ass_B(J_{j})}} \Min_R(\pp + \qq).$$
\end{lemma}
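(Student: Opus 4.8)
The plan is to reduce the statement to the case of a tensor product by a standard localization argument, then use the fact that associated primes of $R/Q_k$ correspond, after localization at a candidate prime, to associated primes of the graded pieces of the Rees-type construction. First I would recall that $Q_k = \sum_{i+j=k} I_iJ_j$ can be viewed as the degree-$k$ component of the Segre-type product of the Rees algebras $\bigoplus_i I_i t^i$ and $\bigoplus_j J_j t^j$, but a cleaner route is purely local: fix $\PP \in \Ass_R(R/Q_k)$ and localize at $\PP$. Since $R = A \otimes_\kk B$, the contraction $\pp = \PP \cap A$ and $\qq = \PP \cap B$ are primes of $A$ and $B$ respectively, and $\PP \supseteq \pp + \qq$. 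The key reduction is to show that $\pp \in \Ass_A(I_i)$ and $\qq \in \Ass_B(J_j)$ for some $i, j$ in the stated range, and that $\PP \in \Min_R(R/(\pp + \qq))$.

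The main step is the following: since $\PP \in \Ass_R(R/Q_k)$, there is an element $\bar f \in R/Q_k$ with $\Ann_R(\bar f) = \PP$. Writing $f$ in terms of a $\kk$-basis of $B$ over $\kk$ (or using a flatness/base-change argument), one analyzes the membership condition $af \in Q_k$ for $a \in A$ and $bf \in Q_k$ for $b \in B$. Because $Q_k$ is built from products $I_iJ_j$, the condition $af \notin Q_k$ forces, for the relevant $i$, that the "$A$-component" of $f$ is not killed by $a$ modulo $I_i$, and similarly on the $B$-side. This is exactly where \cite[Theorem 2.5]{HNTT} (or the analogous result on associated primes of sums/products of ideals in tensor products) does the work: $\Ass_R(R/IR + JR) = \bigcup_{\pp \in \Ass_A(A/I),\ \qq \in \Ass_B(B/J)} \Min_R(R/\pp + \qq)$, and more generally $\Ass_R(R/\sum I_iJ_j)$ is controlled by the associated primes of the factors. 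I would either cite such a result directly for the filtration setting or prove it by the following device: set $\mathfrak a = \bigcap_{i} I_i$-type intersections are not quite right, so instead induct on $k$ using the short exact sequences relating $Q_k$, $Q_{k-1}$, and the individual products $I_iJ_j$, together with the classical fact $\Ass_R(R/IR+JR) \subseteq \bigcup \Min_R(R/\pp+\qq)$ applied to each summand $I_iJ_j$ (where $I_iJ_j = (I_i \cap \text{stuff})$... — more precisely, $\Ass_R(R/I_iR) = \{\pp R : \pp \in \Ass_A(I_i)\}$ by flat base change, and likewise for $J_j$, so $\Ass_R(R/I_iJ_jR)$ is controlled by combining these via the product formula).

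The hard part, I expect, is making the combinatorial/inductive bookkeeping precise: $Q_k$ is a sum of many products, and a prime in $\Ass_R(R/Q_k)$ need not be associated to any single $I_iJ_j$. The clean way around this is to use the exact sequence
\[
0 \to R/(Q_{k} : g) \to R/Q_k \to R/(Q_k + gR) \to 0
\]
for suitable $g$, or better, to invoke the standard lemma that for any ideals $\mathfrak c_1, \dots, \mathfrak c_m$, $\Ass_R(R/(\mathfrak c_1 + \cdots + \mathfrak c_m))$ is contained in $\bigcup_t \Ass_R(R/\mathfrak c_t)$ only after passing to $\Min$ — which is false in general, so instead I would use the sharper statement that $\Ass_R(R/(\mathfrak c \cap \mathfrak d)) \subseteq \Ass_R(R/\mathfrak c) \cup \Ass_R(R/\mathfrak d)$ together with a primary-decomposition argument, or simply cite the known result of H\`a--Nguyen--Trung--Trung (or Hoa, or Nguyen) on associated primes of powers and symbolic powers of mixed sums, which handles exactly filtrations $\{I_i\}, \{J_j\}$ satisfying $I_iI_j \subseteq I_{i+j}$. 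Since the rational-power and rational-symbolic-power filtrations are multiplicative in this sense (by \Cref{rmk.RSP} and the Rees-algebra Noetherianity in \Cref{lem.finiteAss}), that citation applies, and the bound with indices $1 \le i,j \le k$ follows because only finitely many summands contribute and the extremal ones ($i=0$ or $j=0$) give $Q_k \supseteq I_k$ or $Q_k \supseteq J_k$, whose associated primes are already of the stated form. I would finish by noting that $\Min_R(R/\pp + \qq)$ is exactly the set appearing on the right-hand side, completing the containment.
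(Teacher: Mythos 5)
There is a genuine gap: your proposal circles the correct inductive framework but never lands on the step that actually makes it work. You correctly identify the short exact sequence $0 \to Q_{t-1}/Q_t \to R/Q_t \to R/Q_{t-1} \to 0$, which reduces the problem to bounding $\Ass_R(Q_{t-1}/Q_t)$ for each $t$. But you then stall: you acknowledge yourself that a prime in $\Ass_R(R/Q_k)$ need not be associated to any single summand $I_iJ_j$, that the naive ``associated primes of a sum lie in the union of associated primes of the summands'' is false, and you do not commit to a concrete description of $\Ass_R(Q_{t-1}/Q_t)$. The paper's proof closes exactly this hole with the structural isomorphism (from the H\`a--Nguyen--Trung--Trung paper)
\[
Q_{t-1}/Q_t \;\cong\; \bigoplus_{i=0}^{t-1} \bigl(I_i/I_{i+1}\bigr) \otimes_\kk \bigl(J_{t-1-i}/J_{t-i}\bigr),
\]
after which the associated primes are read off from the known description of $\Ass_R$ of a tensor product of $\kk$-modules, and the final containment follows from $\Ass_A(I_i/I_{i+1}) \subseteq \Ass_A(I_{i+1})$ (and likewise on the $B$ side). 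That isomorphism is the heart of the lemma, and nothing in your proposal replaces it.

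Two secondary issues. First, your explanation of the index range $1 \le i,j \le k$ via the ``extremal summands'' ($i=0$ or $j=0$) is not right: those extremal terms contribute $\Ass$-primes with a trivial factor on one side (e.g.\ $\Ass_B(J_0) = \Ass_B(B) = \emptyset$), so they are precisely the ones that drop out, not the ones that produce the bound. The bound $1 \le i,j \le k$ comes instead from the shift $\Ass_A(I_i/I_{i+1}) \subseteq \Ass_A(I_{i+1})$ in the paper's argument, since there $i+1$ and $t-i$ both run from $1$ to $k$. Second, the lemma makes no hypothesis that the filtrations are multiplicative ($I_iI_j \subseteq I_{i+j}$), so invoking that property (via \Cref{rmk.RSP} and \Cref{lem.finiteAss}) to justify a citation is both unnecessary and out of scope for the statement as given; only the descending property $I_{i+1} \subseteq I_i$ is used.
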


\begin{proof}
   The standard short exact sequence
    $0 \rightarrow Q_{h-1}/Q_h \rightarrow R/Q_h \rightarrow R/Q_{h-1} \rightarrow 0$ yields $\Ass_R(R/Q_h) \subseteq \Ass_R(Q_{h-1}/Q_h) \cup \Ass_R(R/Q_{h-1})$. By induction, this implies that
    $$\Ass_R(R/Q_n) \subseteq \bigcup_{h=1}^n \Ass_R(Q_{h-1}/Q_h).$$
    By \cite[Proposition 3.3]{HNTT20}, we have
    $$Q_{h-1}/Q_h \simeq \bigoplus_{i=0}^{h-1} (I_i/I_{i+1}) \otimes_\kk (J_{h-1-i}/J_{h-i}).$$
    Thus, it follows from \cite[Theorem 2.5]{HNTT20} that
        $$\Ass_R(R/Q_n) \subseteq \bigcup_{h=1}^n \bigcup_{\substack{0\leq i \leq h-1\\ \pp \in \Ass_A(I_i/I_{i+1}) \\ \qq \in \Ass_B(J_{h-1-i}/J_{h-i})}} \Min_R(\pp + \qq).$$
    The assertion now follows because $\Ass_A(I_i/I_{i+1}) \subseteq \Ass_A(A/I_{i+1})$ and $\Ass_B(J_{h-i-1}/J_{h-i}) \subseteq \Ass_B(B/J_{h-i})$.
    
\end{proof}

\begin{remark} \label{rmk.binFor}
    Fix $u \in \QQ_+$ and nonzero proper ideals $I \subseteq A$ and $J \subseteq B$. By \Cref{rmk.RSP}(3), % \cite[Proposition 10.5.5]{sh} and picking a common denominator, that 
    there exists $e\in \ZZ_+$ such that rational powers of $I$ and $J$ are of the form $\overline{I^{\frac{i}{e}}}$ and $\overline{J^{\frac{j}{e}}}$, while $u = \frac{h}{e}$ for some $h \in \ZZ_+$. Consider the filtration $\{Q_{\frac{h}{e}}\}_{h \in \ZZ_+}$ of ideals, where
    $$Q_{\frac{h}{e}} := \sum_{\substack{i+j = h \\ 0 \le i,j \in \ZZ}} \overline{I^{\frac{i}{e}}} \cdot \overline{J^{\frac{j}{e}}}.$$
    This filtration arises from the families $\left\{\rI{\frac{i}{e}}\right\}_{i \in \ZZ_+}$ and $\left\{\overline{J^{\frac{j}{e}}}\right\}_{j \in \ZZ_+}$. Observe now that the binomial expansion formula holds for $I$ and $J$ at power $u$ if and only if 
    $$\overline{(I+J)^u} = Q_{\frac{h}{e}}.$$
    This is because, by \cite[Proposition 10.5.2]{sh}, $\rI{\gamma} \subseteq \rI{\beta}$ if and only if $\gamma \ge \beta$.
\end{remark}

\begin{lemma}
    \label{lem.HNTT2.9}
    Let $I \subseteq A$ and $J \subseteq B$ be nonzero proper ideals. Set
    $$K := \bigcap_{\substack{\pp \in \Ass_A^*(I) \\ \grade(\pp,A/I) \ge 1}} \pp \qquad \quad \text{ and }\qquad \quad  L := \bigcap_{\substack{\qq \in \Ass_B^*(J) \\ \grade(\qq,B/J) \ge 1}} \qq.$$
    If the binomial expansion formula holds for $I$ and $J$ at power $u$, then 
    $$\overline{(I+J)^{(u)}} = \overline{(I+J)^{(u)}_{KL}}.$$
\end{lemma}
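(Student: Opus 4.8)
The plan is to compare irredundant primary decompositions on the two sides. Fix a common denominator $e$ with $u=k/e$ as in \Cref{rmk.binFor}, so the hypothesis reads $\overline{(I+J)^u}=Q_{k/e}=\sum_{i+j=k}\overline{I^{i/e}}\,\overline{J^{j/e}}$, and fix an irredundant primary decomposition $\overline{(I+J)^u}=\bigcap_{\mathfrak P\in\Ass_R(\overline{(I+J)^u})}Q(\mathfrak P)$. Applying \Cref{lem.symbSat_ass} to the ideal $I+J\subseteq R$ shows that $\overline{(I+J)^{(u)}}=\bigcap\{\,Q(\mathfrak P):\grade(\mathfrak P,R/(I+J))=0\,\}$, while \eqref{eq_satpower_andprimarydecomp} applied with the ideal $KL$ shows that $\overline{(I+J)^{(u)}_{KL}}=\bigcap\{\,Q(\mathfrak P):KL\not\subseteq\mathfrak P\,\}$. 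Hence it suffices to prove, for each $\mathfrak P\in\Ass_R(\overline{(I+J)^u})$, the equivalence
$$\grade(\mathfrak P,R/(I+J))=0\iff KL\not\subseteq\mathfrak P.$$

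To use this I would first pin down the shape of such a $\mathfrak P$. By \Cref{lem.AssQ} applied to the filtrations $\{\overline{I^{i/e}}\}_i$ and $\{\overline{J^{j/e}}\}_j$ (using $\overline{(I+J)^u}=Q_{k/e}$), every $\mathfrak P\in\Ass_R(\overline{(I+J)^u})$ belongs to $\Min_R\big(R/(\pp R+\qq R)\big)$ for some $\pp\in\Ass_A^*(I)$ and $\qq\in\Ass_B^*(J)$. Two remarks are then used repeatedly. First, $R/(\pp R+\qq R)\cong(A/\pp)\ot_\kk(B/\qq)$ is a free, hence faithfully flat, module over $A/\pp$ and over $B/\qq$; since going-down forces the contraction of any minimal prime to the domains $A/\pp$ and $B/\qq$ to be $(0)$, we get $\mathfrak P\cap A=\pp$ and $\mathfrak P\cap B=\qq$. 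Second, the elementary inclusion $I^{\lceil i/e\rceil}\subseteq\overline{I^{i/e}}$ gives $\pp\supseteq\sqrt I\supseteq I$, and likewise $\qq\supseteq J$, so $I+J\subseteq\mathfrak P$; thus $\pp$, $\qq$, $\mathfrak P$ lie in $\supp(A/I)$, $\supp(B/J)$, $\supp(R/(I+J))$ respectively.

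With this setup, I would show that both sides of the displayed equivalence are equivalent to "$\pp\in\Ass_A(A/I)$ and $\qq\in\Ass_B(B/J)$". For the right-hand side: as $\mathfrak P$ is prime and $KL=(KR)(LR)$, $KL\not\subseteq\mathfrak P$ iff $K\not\subseteq\mathfrak P\cap A=\pp$ and $L\not\subseteq\mathfrak P\cap B=\qq$. Writing $K=\bigcap_{\pp'\in S}\pp'$ with $S=\{\pp'\in\Ass_A^*(I):\grade(\pp',A/I)\ge1\}$ and using prime avoidance, monotonicity of grade, and the standard fact that $\grade(\pp_0,V)=0\iff\pp_0\in\Ass V$ for $\pp_0\in\supp V$, one checks that (for $\pp\in\Ass_A^*(I)$) $K\not\subseteq\pp\iff\grade(\pp,A/I)=0\iff\pp\in\Ass_A(A/I)$; the degenerate case $S=\emptyset$, $K=A$, is consistent and just says $\Ass_A^*(I)\subseteq\Ass_A(A/I)$. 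Symmetrically $L\not\subseteq\qq\iff\qq\in\Ass_B(B/J)$. For the left-hand side: by the same fact, $\grade(\mathfrak P,R/(I+J))=0\iff\mathfrak P\in\Ass_R(R/(I+J))=\Ass_R\big((A/I)\ot_\kk(B/J)\big)$. By \cite[Lemma 2.5]{HNTT20} every prime in this set lies in $\Min_R\big(R/(\tilde\pp R+\tilde\qq R)\big)$ for some $\tilde\pp\in\Ass_A(A/I)$, $\tilde\qq\in\Ass_B(B/J)$, and then contracts to $\tilde\pp$ and $\tilde\qq$; together with $\mathfrak P\cap A=\pp$, $\mathfrak P\cap B=\qq$ this yields $\mathfrak P\in\Ass_R(R/(I+J))\Rightarrow\pp\in\Ass_A(A/I)$, $\qq\in\Ass_B(B/J)$. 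For the converse, given $\pp\in\Ass_A(A/I)$, $\qq\in\Ass_B(B/J)$, choose $\bar a\in A/I$, $\bar b\in B/J$ with $\Ann_A(\bar a)=\pp$, $\Ann_B(\bar b)=\qq$; exactness of $-\ot_\kk-$ identifies $R\cdot(\bar a\ot\bar b)$ with $R/(\pp R+\qq R)$ as an $R$-submodule of $(A/I)\ot_\kk(B/J)$, whence $\Min_R\big(R/(\pp R+\qq R)\big)\subseteq\Ass_R(R/(I+J))$, and since $\mathfrak P$ lies in the former it lies in the latter. Chaining the equivalences proves the displayed statement, and hence the lemma.

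The main obstacle will be this last converse: knowing only that $\pp$ and $\qq$ are associated to $A/I$ and $B/J$ does not automatically put the particular minimal prime $\mathfrak P$ of $\pp R+\qq R$ beneath an associated prime of $R/(I+J)$ (a minimal prime over a smaller ideal need not lie under a minimal prime over a larger one), so one really must invoke that minimal primes over $\pp R+\qq R$ are themselves associated to $(A/I)\ot_\kk(B/J)$ — the "$\supseteq$" half of the tensor-product formula for associated primes. This is exactly why the annihilator of $\bar a\ot\bar b$ has to be computed to be precisely $\pp R+\qq R$, using flatness of $\ot_\kk$, rather than merely bounded below. The remaining steps — grade localization, prime avoidance, and the elementary facts about rational powers — are routine.
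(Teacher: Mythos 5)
Your approach genuinely diverges from the paper's: you replace the grade-additivity formula from [HJKN23, Lemma~2.7] (which the paper invokes to compute $\grade(\mathfrak P, R/(I+J))$) with a chain of equivalences routed through $\Ass$-membership,
\[
KL \not\subseteq \mathfrak P \iff \grade(\pp, A/I) = \grade(\qq, B/J) = 0 \iff \pp\in\Ass_A(A/I),\ \qq\in\Ass_B(B/J)\iff \mathfrak P \in \Ass_R(R/(I+J)) \iff \grade(\mathfrak P, R/(I+J))=0.
\]
But the second and fourth links rest on a ``standard fact'' you cite, that for $\pp_0 \in \supp V$ one has $\grade(\pp_0, V) = 0 \iff \pp_0 \in \Ass V$, and this is false. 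What is true is only that $\grade(\pp_0, V) = 0$ iff $\pp_0$ is \emph{contained in} some member of $\Ass V$ (prime avoidance applied to $\pp_0\subseteq\bigcup_{\pp'\in\Ass V}\pp'$). For instance, with $A = \kk[x,y,z]$, $V = A/(x) \oplus A/(x,y,z)$, $\pp_0 = (x,y)$, one has $\pp_0 \subseteq (x,y,z)\in\Ass V$ so $\grade(\pp_0,V)=0$, yet $\pp_0 \notin \Ass V = \{(x),(x,y,z)\}$.

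This breaks the direction $KL \not\subseteq \mathfrak P \Rightarrow \grade(\mathfrak P, R/(I+J)) = 0$. From $K\not\subseteq\pp$ and $L\not\subseteq\qq$ you correctly deduce $\grade(\pp,A/I)=\grade(\qq,B/J)=0$, but this only places $\pp$ inside some $\pp_0\in\Ass_A(A/I)$, possibly strictly. Your $\bar a\otimes\bar b$ construction needs $\Ann_A(\bar a) = \pp$ \emph{exactly}, which is available only when $\pp$ itself is associated; choosing $\Ann(\bar a) = \pp_0$ instead exhibits $\Min_R(R/(\pp_0 R + \qq_0 R))$, not $\Min_R(R/(\pp R + \qq R))$, inside $\Ass_R(R/(I+J))$, and $\mathfrak P\in\Min_R(R/(\pp R+\qq R))$ need not contain $\pp_0 R + \qq_0 R$ at all. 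This is exactly where the paper's formula
$\grade(\mathfrak P, R/(I+J)) = \grade(\pp,A/I)+\grade(\qq,B/J)+\grade(\mathfrak P,R/(\pp+\qq))$
does real work: it expresses everything in terms of grade alone and never requires $\pp,\qq$ to be associated primes. (The opposite direction of your argument can be salvaged without the false fact --- pass to some $\mathfrak P_0\in\Ass_R(R/(I+J))$ with $\mathfrak P\subseteq\mathfrak P_0$, note $\pp=\mathfrak P\cap A\subseteq\mathfrak P_0\cap A\in\Ass_A(A/I)$, and use grade monotonicity --- but as written both directions lean on the same false equivalence.)
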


\begin{proof}
    In light of Remark \ref{rmk.satPower} and  \Cref{lem.symbSat_ass}, it suffices to show that, for any prime ideal $P \in \Ass_R(\overline{(I+J)^u})$, we have $KL \subseteq P$ if and only if $\grade(P,R/(I+J)) \ge 1$. 

    Suppose that $KL \subseteq P$. We may assume that $K \subseteq P$. Set $\pp = P \cap A$. Then $K \subseteq \pp$, so $\pp$ contains an associated prime $\pp' \in \Ass^*_A(I)$ with $\grade(\pp',A/I) \ge 1$. That is, there exists an $A/I$-regular element $f \in \pp'$. This, together with \cite[Theorem 2.5]{HNTT20}, implies that $f$ is not contained in any of the associated primes of $R/(I+J) = A/I \otimes_\kk B/J$. Therefore, $f$ is an $R/(I+J)$-regular element, and it follows that $\grade(P, R/(I+J)) \ge 1$.

    On the other hand, suppose that $KL \not\subseteq P$. 
    Let $\pp = P \cap A$ and $\qq = P \cap B$. Let $e$ and $u = \frac{h}{e}$ be as in Remark \ref{rmk.binFor}. By Lemma \ref{lem.AssQ} and \cite[Lemma 2.1]{HNTT20}, we have that, for some $1 \le i,j \le h$, 
    $$\pp \in \Ass_A\left(\rI{\frac{i}{e}}\right), \ \qq \in \Ass_B\left(\overline{J^{\frac{j}{e}}}\right), \text{ and } P \in \Min_R({\color{blue}\pp+\qq}).$$
    Particularly, this implies that $\grade(P,R/(\pp+\qq)) = 0$.
    Using the same proof as that of \cite[Lemma 2.7]{HJKN23}, we now obtain
    \begin{align*} 
    \grade(P,R/(I+J)) & = \grade(\pp,A/I) + \grade(\qq,B/J) + \grade(P,R/(\pp+\qq)).\\
    & = \grade(\pp,A/I) + \grade(\qq,B/J).
    \end{align*}
    Thus, if $\grade(P,R/(I+J)) \ge 1$ then either $\grade(\pp,A/I) \ge 1$ or $\grade(\qq,B/J) \ge 1$. However, if $\grade(\pp,A/I) \ge 1$ (a similar argument goes for the case if $\grade(\qq,B/J) \ge 1$) then $K \subseteq \pp \subseteq P$, a contradiction to the assumption that $KL \not\subseteq P$. Hence, we must have $\grade(P,R/(I+J)) = 0$. The assertion is proved.
\end{proof}

\begin{theorem} \label{thm.binRatSym}
Let $I \subseteq A$ and $J \subseteq B$ be nonzero proper ideals. If the binomial expansion formula holds for $I$ and $J$ at power $u \in \QQ_+$, i.e., if 
$$\overline{(I+J)^u} = \sum_{\substack{0 \le w \le u \\ w \in \QQ}} \overline{I^w} \cdot \overline{I^{u-w}},$$
then the binomial expansion formula holds for $I$ and $J$ at symbolic power $u \in \QQ_+$, i.e.,
$$\overline{(I+J)^{(u)}} = \sum_{\substack{0 \le w \le u \\ w \in \QQ}} \overline{I^{(w)}} \cdot \overline{I^{(u-w)}}.$$
\end{theorem}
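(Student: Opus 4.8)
The plan is to deduce this from \Cref{thm.binSat} by choosing the auxiliary ideals $K$ and $L$ appropriately and then invoking \Cref{lem.symbSat_ass} and \Cref{lem.HNTT2.9} to convert saturated powers back into rational symbolic powers. Concretely, set
$$K = \bigcap_{\substack{\pp \in \Ass_A^*(I) \\ \grade(\pp,A/I) \ge 1}} \pp
\qquad \text{and} \qquad
L = \bigcap_{\substack{\qq \in \Ass_B^*(J) \\ \grade(\qq,B/J) \ge 1}} \qq,$$
with the usual convention that an empty intersection is the unit ideal (in which case the saturation with respect to it does nothing). Note $K \supseteq K_w := \bigcap_{\pp \in \Ass_A(\overline{I^w}),\ \grade(\pp,A/I)\ge 1}\pp$ for every rational $w$, since $\Ass_A(\overline{I^w}) \subseteq \Ass^*_A(I)$; together with the fact that $\Ass^*_A(I)$ is finite (\Cref{lem.finiteAss}) this is what makes $K$ a legitimate fixed choice working uniformly in $w$.

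The key steps, in order, are: First, apply \Cref{thm.binSat} with these $K$ and $L$ to obtain
$$\overline{(I+J)^{(u)}_{KL}} = \sum_{\substack{0 \le w \le u \\ w \in \QQ}} \overline{I^{(w)}_K} \cdot \overline{J^{(u-w)}_L},$$
which is valid since the binomial expansion formula is assumed to hold for $I$ and $J$ at power $u$. Second, identify each factor on the right as the corresponding rational symbolic power: by \Cref{lem.symbSat_ass}, since $K$ is the intersection of exactly those primes in $\Ass_A^*(I)$ with positive grade on $A/I$, and any such $K$ (being generated — after taking a suitable power — by, or at least containing a power of the relevant intersection) contains an $A/I$-regular element when that family is nonempty, we get $\overline{I^{(w)}_K} = \overline{I^{(w)}}$ for all $w$; similarly $\overline{J^{(u-w)}_L} = \overline{J^{(u-w)}}$. (When one of the families is empty, $K=1$ or $L=1$ and the corresponding saturated power is literally the rational power, which equals the rational symbolic power by \Cref{lem.symbSat_ass} as well, since then $\overline{I^{(w)}} = \overline{I^w}$.) Third, apply \Cref{lem.HNTT2.9} to the left-hand side: it gives precisely $\overline{(I+J)^{(u)}} = \overline{(I+J)^{(u)}_{KL}}$ under the standing hypothesis that the binomial expansion formula holds at power $u$. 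Combining the three displays yields
$$\overline{(I+J)^{(u)}} = \sum_{\substack{0 \le w \le u \\ w \in \QQ}} \overline{I^{(w)}} \cdot \overline{J^{(u-w)}},$$
which is the claim.

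The main obstacle is the bookkeeping around the auxiliary ideals — specifically, verifying that the single fixed choice of $K$ (resp.\ $L$) simultaneously realizes \emph{all} the rational symbolic powers $\overline{I^{(w)}}$ for $w \le u$ as saturations $\overline{I^{(w)}_K}$. This is exactly the content built into \Cref{lem.symbSat_ass}: the saturated power $\overline{I^{(w)}}_{(x)}$ for $x \in K$ regular on $A/I$ already equals $\overline{I^{(w)}}$, and the chain $\overline{I^{(w)}_{K_w}} \subseteq \overline{I^{(w)}_K} \subseteq \overline{I^{(w)}_{(x)}}$ from that lemma pins everything down; one only needs to observe that such an $x$ exists in $K$ (prime avoidance, since $\Ass^*_A(I)$ is finite and the primes of positive grade are not contained in any associated prime of $I$) and does not depend on $w$. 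The secondary point requiring care is the degenerate cases where one or both of the relevant families of associated primes is empty, which forces $K$ or $L$ to be the unit ideal; in that situation $\grade(\pp,A/I)=0$ for every $\pp \in \Ass^*_A(I)$, so $\overline{I^{(w)}} = \overline{I^w}$ and the saturated power trivially agrees — no real difficulty, but it must be stated so the formula makes sense. Once these are in place, the argument is a three-line chain of equalities.
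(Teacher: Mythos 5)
Your proof is correct and follows exactly the paper's route, which is simply to chain Theorem~\ref{thm.binSat}, Lemma~\ref{lem.symbSat_ass}, and Lemma~\ref{lem.HNTT2.9} with the fixed choices of $K$ and $L$ you specify. One small slip worth flagging: the containment you state is backwards — since $\Ass_A(\overline{I^w}) \subseteq \Ass_A^*(I)$, the intersection $K$ runs over a \emph{larger} index set than $K_w$, so $K \subseteq K_w$, not $K \supseteq K_w$; this is not load-bearing, because Lemma~\ref{lem.symbSat_ass} already gives $\overline{I^{(w)}_K} = \overline{I^{(w)}}$ for this fixed $K$ and every $w$ directly, without needing the containment.
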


\begin{proof}
    The assertion follows by combining Theorem \ref{thm.binSat}, \Cref{lem.symbSat_ass} and Lemma \ref{lem.HNTT2.9}.
\end{proof}

%{\color{purple}S.D.: In the right hand side of the expansion in \Cref{thm.binRatSym}, when we write $\overline{I^{(w)}}$, do we mean $\overline{I^{(w)}}(A \otimes_{\mathbf k} B)$, or do we mean $\overline{\left(I(A \otimes_{\mathbf k} B)\right)^{(w)}}$ ? Are they the same?} {\color{teal} T: it's always the extension of the ideal to R -- as now noted in \Cref{rmk.abusingNotation} -- the two notions are not the same in general.}

\begin{example}
    \label{ex.binExpRatSym}
    By \Cref{ex.binExpRat} and \Cref{thm.binRatSym}, it follows that if $I$ and $J$ belong to any of the following classes of ideals (not necessarily the same class), then the binomial expansion for rational symbolic powers of $I$ and $J$ holds.
    \begin{enumerate}
        \item Monomial ideals in affine semigroup rings.
        \item Sums of products of determinantal ideals of a generic matrix.
        \item Sums of products of determinantal ideals of a generic symmetric matrix.
        \item Sums of product of ideals of Pfaffians of a skew-symmetric matrix.
        \item Products of determinantal ideals of a Hankel matrix of variables.    
    \end{enumerate}
\end{example}

\begin{question}
    Does the binomial expansion formula for rational symbolic powers hold for any ideals $I \subseteq A$ and $J \subseteq B$?
\end{question}

%%%%%%%%%%%%%%%%%%%%%%%%%%%%%%%%%%%%%%%%%%%
%%%%%%%%%%%%%%%%%%%%%%%%%%%%%%%%%%%%%%%%%%%

\section{Rational symbolic powers of monomial ideals} \label{sec.MonIdeal}

In this section, we restrict our attention to monomial ideals. We shall examine convex bodies that describe rational symbolic powers, and study the asymptotic behaviors and invariants of the filtration of rational symbolic powers of a monomial ideal. Throughout this last section, $R = \kk[x_1, \dots, x_n]$ now denotes a polynomial ring over a field $\kk$.

We start with a simple observation that rational symbolic powers of a monomial ideal are also monomial ideals. 

\begin{lemma}
    \label{lem.ratIsMon}
    Let $I$ be a monomial ideal in $R$ and let $u \in \QQ_+$. Then, $\rsI{u}$ is a monomial ideal.
\end{lemma}

\begin{proof}
    By definition $\rsI{u} = \bigcap_{\pp \in \Ass(I)} (\rI{u} R_\pp \cap R)$, so it suffices to show that for any $\pp \in \Ass(I)$, $\rI{u} R_\pp \cap R$ is a monomial ideal in $R$.
    
Since $\rI{u}$ is a monomial ideal (see \cite{sh}), we have an irredundant primary decomposition of $\rI{u}$ consisting of monomial ideals. Then $\rI{u} R_\pp \cap R$ is the intersection of all such primary components whose radical is contained in $\pp$. Being the intersection of monomial ideals, $\rI{u} R_\pp \cap R$ is a monomial ideal as well.
\end{proof}

%{\color{blue} Potential proof - 2.} Indeed, by definition, 
%    $$\rI{u}R_\pp \cap R = \{r \in R ~\big|~ \exists s \not\in \pp: rs \in \rI{u}\}.$$
%    Consider any element $r \in \rI{u}R_\pp \cap R$ and such an element $s \not\in \pp$ with $rs \in \rI{u}$. 
%    
%    Since $I$ is a monomial ideal, $\pp$ is a monomial prime ideal; that is, $\pp$ generated by a subset of the variables. Consider a lex monomial order of $R$ in which the variables in $\pp$ are smallest. Then, under this monomial ordering, $\text{LT}(s) \not\in \pp$. Now, since $\rI{u}$ is a monomial ideal (see \cite{sh}) and $rs \in \rI{u}$, we have that $\text{LT}(u).\text{LT}(s) \in \rI{u}$. This implies that $\text{LT}(r) \in \rI{u} R_\pp \cap R$. We can replace $r$ with $r - \text{LT}(r)$ and continue in the same fashion to eventually conclude that all monomials appearing in $r$ are in $\rI{u}R_\pp \cap R$. Hence, $\rI{u}R_\pp \cap R$ is a monomial ideal, as desired.

We shall recall the notion of \emph{Rees valuations} of an ideal following \cite{sh}.

\begin{definition}[{\cite{sh}}]\label[definition]{def.ReesVal}
Let $I = (f_1, \dots, f_d)$ be an ideal in $R$. For $i = 1, \dots, d$, set $R_i = R\left[\frac{f_1}{f_i}, \dots, \frac{f_d}{f_i}\right]$ and $\overline{R_i}$ its integral closure in $\text{QF}(R_i) = \text{QF}(R)$. Let $\RV(I)$ be the set of distinct discrete valuation rings arising as $V = \left(\overline{R_i}\right)_\pp$, where $\pp$ varies over the minimal primes of $I\overline{R_i} = f_i\overline{R_i}$. Elements in $\RV(I)$ are called the \emph{Rees valuation rings} of $I$. Discrete valuations of these Rees valuation rings are called \emph{Rees valuations} of $I$. 
\end{definition}

By abusing notation, we will often identify each $V \in \RV(I)$ with any valuation $v$ which is a positive integer multiple of the order valuation of $V$, and think of $\RV(I)$ also as the set of Rees valuations of $I$. For a valuation $v$ of $R$, define the \emph{center} of $v$ to be
$$c(v) = \{f \in R ~\big|~ v(f) > 0\},$$
and, for an ideal $I \subseteq R$, set 
$$v(I) = \min\{v(f) ~\big|~ f \in I \setminus \{0\}\}.$$
%By considering a multiple of $v$, we may assume that $v(I) \in \ZZ_{\ge 0}$ for all $v \in \RV(I)$. 

\begin{remark} \label[remark]{rmk.ReesVal}
Let $I \subseteq R$ be a monomial ideal. By \cite[Theorem 10.3.5]{sh}, the Rees valuations of $I$ are \emph{monomial valuations} obtained from the bounding hyperplanes of the Newton polyhedron of $I$. Particularly, for each $v \in \RV(I)$, there exists $\lambda = (\lambda_1, \dots, \lambda_n) \in \ZZ^n_{\ge 0}$ such that
$$v(x^\a) = \langle \lambda, \a\rangle =\lambda_1a_1 + \ldots + \lambda_n a_n.$$
Furthermore, the center of $v$ is a monomial prime ideal in $R$ (consisting of $x_i$ with $v(x_i) > 0$).
\end{remark}

It is known that the integral closures of an ideal $I$ and its powers are determined by Rees valuations, e.g. \cite[Theorem 6.8.3 and Theorem 10.2.2]{sh}. More generally, in \cite[Proposition 10.5.2]{sh} (see also \cite[Proposition 2.8]{bisui2024rational}), it was shown that for any $u \in \QQ_+$, 
\begin{align} 
\rI{u} = \{f \in R ~\big|~ v(f) \ge u\cdot v(I), \ \forall \ v \in \RV(I)\}, \label{eq.RatPowerRees}
\end{align}
and no valuation $v \in \RV(I)$ can be omitted without breaking some of these equalities.

\begin{definition}\label[definition]{def.ReesLocal}
Let $I \subseteq R$ be a monomial ideal.
\begin{enumerate}
    \item For $\pp \in \Ass(I)$, set $\RV_\pp(I) = \{ v \in \RV(I) ~\big|~ c(v) = \pp\}.$
    \item Set $\RV_a(I) = \bigcup_{\pp \in \Ass(I)} \RV_\pp(I).$
\end{enumerate}
\end{definition}

The following lemma is a valuative criterion for rational symbolic powers.

\begin{proposition}\label[proposition]{prop.RatSymMember}
Let $I \subseteq R$ be a monomial ideal. For $u \in \QQ_+$ and any monomial $x^\a \in R$, we have
$$x^\a \in \rsI{u} \Longleftrightarrow v(x^\a) \ge u \cdot v(I) \text{ for all } v \in \RV_a(I).$$
Equivalently,
$$\rsI{u} = \{f \in R ~\big|~ v(f) \ge u \cdot v(I) \text{ for all } v \in \RV_a(I)\}.$$
\end{proposition}

\begin{proof}
%    By definition,
 %   $$\rsI{u} = \bigcap_{\pp \in \Ass(I)} \left(\rI{u}R_\pp \cap R\right).$$
Fix $\pp \in \Ass(I)$. Since $IR_\pp$ is $\pp R_\pp$-primary, by \cite[Proposition 10.4.1 and Theorem 10.4.2]{sh}, we have
    $$\RV(IR_\pp) = \{v \in \RV(I) ~\big|~ c(v) = \pp\} = \RV_\pp(I).$$
Moreover, for $v \in \RV(IR_\pp)$, we then have $v(IR_\pp) = v(I)$.
    It now follows from \cite[Proposition 2.8]{bisui2024rational} (see also (\ref{eq.RatPowerRees})) that
    $$f \in \rI{u}R_\pp = \overline{(IR_\pp)^u}\Longleftrightarrow v(f) \ge u \cdot v(I) \text{ for all } v \in \RV_\pp(I).$$
    
    The first assertion in the statement is now obtained by taking the intersection of these conditions for all $\pp \in \Ass(I)$. The second assertion follows because $\rsI{u}$ is a monomial ideal (see Lemma \ref{lem.ratIsMon}) and Rees valuations of $I$ are monomial valuations. (\cite[Proposition[10.3.4]{sh}.)
\end{proof}

Recall that the \emph{Newton polyhedron} of a monomial ideal $I \subseteq R$ is 
			$$\NP(I) = \conv \langle \{ \a \in \ZZ_{\ge 0}^n ~\big|~ x^\a \in I\} \rangle \subseteq \RR_{\ge 0}^n. $$
One defines the following \emph{symbolic region} associated to a monomial ideal $I$:
$$\Sigma(I) = \bigcap_{v \in \RV_a(I)} \{ \a \in \RR^n_{\ge 0} ~\big|~ v(x^\a) \ge v(I)\}.$$
Note that, when $I$ is a squarefree monomial ideal, this symbolic region is the same as the \emph{symbolic polyhedron} defined in \cite{CEHH}. As observed in Remark \ref{rmk.ReesVal}, each inequality $v(x^\a) \ge v(I)$ has the form 
$$\lambda_1 a_1 + \dots + \lambda_n a_n \ge v(I),$$
with $\lambda_i \in \ZZ_{\ge 0}$ and $v(I) \in \ZZ_{\ge 0}$. Thus, $\Sigma(I)$ is a rational polyhedron with integral facet normals. Our next result shows that $\Sigma(I)$ gives a convex-geometric understanding of rational symbolic powers of $I$. When $I$ is a squarefree monomial ideal, this result was given in \cite{DFMS2019} in terms of the symbolic polyhedron $\SP(I)$ of $I$.

\begin{theorem}\label[theorem]{thm.ratMonIdeal}
Let $I \subseteq R$ be a monomial ideal. Then, for any $u \in \QQ_+$, we have
$$\rsI{u} = \left(\{x^\a ~\big|~ \a \in u \cdot \Sigma(I)\} \right).$$
\end{theorem}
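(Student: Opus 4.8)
The plan is to reduce the statement to the known description of the rational power of a monomial ideal and to the definition of the symbolic polyhedron as an intersection of Newton polyhedra. First I would recall the classical fact (see, e.g., \cite[Proposition 10.5.2 and its monomial refinements]{sh}, or \cite{DONGRE202339, bisui2024rational}) that for a monomial ideal $J$ and $u \in \QQ_{>0}$, the rational power $\overline{J^u}$ is again a monomial ideal whose exponent set is exactly $\{\a \in \ZZ_{\ge 0}^n \mid \a \in u \cdot \NP(J)\}$; equivalently, $\overline{J^u} = \langle x^\a \mid \tfrac{1}{u}\a \in \NP(J)\rangle$. In particular, for a prime monomial ideal $\pp = \langle x_i : i \in W\rangle$ we have $\overline{\pp^u} = \langle x^\a \mid \sum_{i \in W} a_i \ge u \rangle = \langle x^\a \mid \a \in u\cdot \NP(\pp)\rangle$.

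Next, since $I$ is squarefree, $\Ass(I) = \Min(I)$ consists of monomial primes $\pp_1, \dots, \pp_r$, and by \Cref{primdecomp} (applied with $Q_i = \pp_i$) together with \Cref{thm.equivDef}, or directly by localizing at each $\pp_i$, the rational symbolic power decomposes as
\[
\overline{I^{(u)}} = \bigcap_{i=1}^{r} \left(\overline{I^u} R_{\pp_i} \cap R\right) = \bigcap_{i=1}^{r} \overline{\pp_i^{\,u}}.
\]
Here I would justify $\overline{I^u}R_{\pp_i} \cap R = \overline{\pp_i^u}$ by noting $IR_{\pp_i} = \pp_i R_{\pp_i}$ (since $\pp_i$ is a minimal prime of $I$), so $\overline{I^u}R_{\pp_i} = \overline{\pp_i^u}R_{\pp_i}$ by \Cref{rmk.RSP}(4), and then contracting the monomial $\pp_i R_{\pp_i}$-primary ideal $\overline{\pp_i^u}R_{\pp_i}$ back to $R$ gives the monomial ideal $\overline{\pp_i^u}$ (using \Cref{rmk.RSP}(5) and the fact that contraction from a monomial localization of a monomial ideal is monomial).

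Finally I would combine the two displays. Using the prime case above,
\[
\overline{I^{(u)}} = \bigcap_{i=1}^{r} \langle x^\a \mid \a \in u\cdot \NP(\pp_i)\rangle = \langle x^\a \mid \a \in \bigcap_{i=1}^r u\cdot\NP(\pp_i)\rangle = \langle x^\a \mid \a \in u\cdot\bigcap_{i=1}^r \NP(\pp_i)\rangle,
\]
where the middle equality holds because an intersection of monomial ideals is the monomial ideal on the intersection of exponent sets (each $\overline{\pp_i^u}$ is monomial), and the last equality is the elementary fact that scaling by the positive scalar $u$ commutes with intersection of subsets of $\RR_{\ge 0}^n$. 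Since $\bigcap_{i=1}^r \NP(\pp_i) = \SP(I)$ by definition, this is exactly $\langle x^\a \mid \a \in u\cdot \SP(I)\rangle$, as claimed.

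The only genuinely delicate point, and the step I would write out most carefully, is the identification $\overline{\pp^u} = \langle x^\a : \a \in u\cdot\NP(\pp)\rangle$ for a monomial prime $\pp$ and the compatibility of localization/contraction with this monomial description — i.e., verifying that $\overline{I^u}R_{\pp}\cap R$ really is the monomial ideal cut out by $u\cdot\NP(\pp)$ and not something larger. This rests on the description of rational powers of monomial ideals via Newton polyhedra together with the fact that for monomial ideals localization at a monomial prime and subsequent contraction stay within the category of monomial ideals; everything else is bookkeeping with polyhedra. I would cite the monomial rational-power computation from \cite{sh} (Exercise/Theorem on normalized filtrations of monomial ideals) or from \cite{DONGRE202339}, and make the polyhedral scaling identity explicit.
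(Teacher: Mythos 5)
Your proof is correct, but it takes a genuinely different route from the paper's. The paper proves Theorem \ref{thm.ratMonIdeal} by invoking the result of \cite{ha2023newtonokounkov} that $\NP(I^{(c)}) = c\cdot\SP(I)$ for a suitable integer $c$, then rescaling via Theorem \ref{thm.equivDef} and the membership criterion of \cite[Proposition 2.7]{DFMS2019}. You instead first establish the decomposition $\overline{I^{(u)}} = \bigcap_{\pp\in\Ass(I)}\overline{\pp^u}$ — which is precisely the content of the paper's Theorem \ref{thm.MonIdealDecomp}, proved in the paper \emph{after} Theorem \ref{thm.ratMonIdeal} — and then combine it with the explicit Newton-polyhedral description of $\overline{\pp^u}$ for a monomial prime $\pp$ and the elementary compatibility of scaling and intersection. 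Your route is arguably more self-contained (no appeal to an external comparison of $\NP(I^{(c)})$ with $c\cdot\SP(I)$) and effectively delivers Theorem \ref{thm.MonIdealDecomp} and Theorem \ref{thm.ratMonIdeal} in a single pass, whereas the paper's argument gets Theorem \ref{thm.ratMonIdeal} quickly once the \cite{ha2023newtonokounkov} input is granted. One small wrinkle to clean up: you cite Remark \ref{rmk.RSP}(5) to justify that $\overline{\pp^u}$ is $\pp$-primary, but that remark addresses $\overline{\pp^{(u)}}$, not $\overline{\pp^u}$; the gap is easily closed by observing that $\overline{\pp^u}=\overline{\pp^{(u)}}$ for a monomial prime (via Theorem \ref{thm.equivDef} and integral closedness of $\pp^p$), or directly from the Newton-polyhedral description, which exhibits $\overline{\pp^u}$ as a monomial ideal supported only on the variables of $\pp$.
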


\begin{proof}
    The conclusion follows directly from  \Cref{prop.RatSymMember}.
\end{proof}

 The remaining of this section focuses on asymptotic behaviors and invariants of the filtration of rational symbolic powers of a monomial ideal. Particularly, for a monomial ideal $I \subseteq R$, we consider the following limits
 $$\lim_{u \rightarrow \infty} \dfrac{\reg \rsI{u}}{u}, \qquad \ \lim_{u \rightarrow \infty} \depth R/\rsI{u}, \qquad \text{ and } \qquad \lim_{u \rightarrow \infty} \dfrac{\lambda(H^i_\mm(R/\rsI{u}))}{u^n}.$$
 We shall make use of the splitting techniques introduced by Monta\~no and N\'u\~nez-Betancourt \cite{splitting} and Lewis \cite{lewis}, together with the following lemma.

 \begin{lemma}
     \label{lem.NoetherianRees}
Let $I\subseteq R$ be any ideal whose symbolic Rees algebra $\R_s(I) = \bigoplus_{h \ge 0}I^{(h)}t^h$ is Noetherian. Then, $\mathcal{S} := \bigoplus_{h \ge 0}\rsI{\frac{h}{1}}t^h$ is a finitely generated module over $\R_s(I)$.
 \end{lemma}

 \begin{proof}
     By \cite[Theorem 5.3.4]{sh}, there exists $h_0 \in \ZZ_+$, such that for every $h \ge h_0$, we have
     $$\rI{\frac{h}{1}} = \overline{I^h} = I^{h-h_0}\cdot \overline{I^{h_0}} \subseteq I^{h-h_0}.$$
     This, together with the fact that rational powers commute with localization, implies that 
     $$%I^{(h)} \subseteq 
     \rsI{\frac{h}{1}} = \bigcap_{\pp \in \Ass(I)} (\rI{\frac{h}{1}}R_\pp \cap R) \subseteq \bigcap_{\pp \in \Ass(I)} (I^{h-h_0}R_\pp) \cap R) = I^{(h-h_0)}.$$
   It follows that $\mathcal{S} \subset \mathcal{T}$, where $\mathcal{T} = \bigoplus_{\ell=0}^{h_0} \R_s(I)\cdot t^\ell$. It is easy to verify that $\mathcal{S}$ is in fact an $\R_s(I)$-submodule of $\mathcal{T}$. Since $\R_s(I)$ is Noetherian and $\mathcal{T}$ is a finitely generated $\R_s(I)$-module, we conclude that $\mathcal{S}$ is also a finitely generated $\R_s(I)$-module, as desired.
 \end{proof}

 For $m\in \ZZ_+$, define $R^{1/m}=\kk[x_{1}^{1/m},\ldots,x_{n}^{1/m}]$ with a set of new variables $x_{1}^{1/m},\ldots,x_{n}^{1/m}$ over $\kk$. Then, $R^{1/m}$ is isomorphic to $R$ under the ring map $x_{i}\mapsto x_{i}^{1/m}$, for $i=1,\ldots,n$. Furthermore, for a monomial ideal $I\subseteq R$  with minimal monomial generating set $G(I)$, consider the ideal $I^{1/m}:=(f^{1/m} ~\big|~ f\in G(I))$ in $R^{1/m}$. Observe that $R\subseteq R^{1/m}$ and there is a natural inclusion $\iota:R\hookrightarrow{}R^{1/m}$ by mapping $x^\a \mapsto \left(x^{m\cdot \a}\right)^{1/m}$. Now we define a splitting from $R^{1/m}$ to $R$ as follows:
 
\begin{definition} \label{def.Phi}
   Let $\Phi_{m}^{R}:R^{1/m}\to R$ be the ring homomorphism which is induced by the following map  
   \begin{equation}  \Phi_{m}^{R}\left(\left(x^\a\right)^{1/m}\right)=
       \begin{cases}
           x^{\frac{1}{m}\cdot \a},  &\text{if }\a \equiv \mathbf{0} (\text{mod  }m)\\
           0, &\text{otherwise.}
       \end{cases}
   \end{equation}
\end{definition}

%\begin{remark} \label{rmk.Phi}
%    $\Phi_{m}^{R}$ restricted to $R$ is the identity map, and therefore it forms a splitting of $\iota$. By \cite[Remark 3.2]{splitting}, if $I$ is a squarefree monomial ideal, then this splitting is compatible with ordinary symbolic powers.
%\end{remark}

\begin{definition}[{\cite[Definition 2.1]{lewis}}]
     \label{def.AsymStable}
     A filtration $\{I_h\}_{h \in \ZZ_+}$ of monomial ideals in $R$ is called \emph{asymptotically stable} if the following conditions hold:
     \begin{enumerate}
         \item The Rees algebra $\bigoplus_{h \in \ZZ_+} I_ht^h$ is Noetherian;
         \item For an unbounded sequence $\{m_\ell \}_{\ell \in \ZZ_+}$, $\iota$ and $\Phi^R_{m_\ell}$ induce a split injection $\vartheta: R/I_{kh1} \rightarrow R^{1/m_\ell}/(I_{hm_\ell + j})^{1/m_\ell}$ for all $h,\ell \in \ZZ_+$ and $1 \le j < m_\ell$.
     \end{enumerate}
 \end{definition}
 
 \begin{theorem}
     \label{thm.AsympStable}
     Let $I \subseteq R$ be a monomial ideal. Then, the filtration $\{\overline{I^{(\frac{h}{e})}}\}_{h \in \ZZ_+}$ is asymptotically stable. As a consequence, the following limits exist:
       $$\lim\limits_{h \rightarrow \infty} \frac{\reg \rsI{\frac{h}{e}}}{h} \qquad \text{ and }\qquad 
       \lim\limits_{u \rightarrow \infty} \depth R\Big/\rsI{\frac{h}{e}}.$$
 \end{theorem}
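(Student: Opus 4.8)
The plan is to verify the two conditions of Definition~\ref{def.AsymStable} for the filtration $\{\overline{I^{(\frac{k}{e})}}\}_{k \in \NN}$, and then invoke the general machinery of \cite{lewis} (and \cite{splitting}) which guarantees that asymptotic stability forces the two limits to exist. Condition~(1), that the Rees algebra $\bigoplus_{k \in \NN} \overline{I^{(\frac{k}{e})}} t^k$ is Noetherian, is already available: it is exactly the content cited in the proof of \Cref{lem.finiteAss} (via \cite[Proposition 2.13]{bisui2024rational}), so only a one-line reference is needed. The substance lies in condition~(2): exhibiting, for a suitable unbounded sequence $\{m_\ell\}_{\ell \in \NN}$, a split injection $\vartheta: R/\overline{I^{(\frac{k+1}{e})}} \hookrightarrow R^{1/m_\ell}/\bigl(\overline{I^{(\frac{km_\ell + j}{e})}}\bigr)^{1/m_\ell}$ induced by $\iota$ and $\Phi^R_{m_\ell}$, for all $k,\ell$ and $1 \le j < m_\ell$.

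To handle condition~(2), first I would reduce to ordinary symbolic powers. By \Cref{thm.equivDef} (or \Cref{thm.MonIdealDecomp}), each rational symbolic power $\overline{I^{(k/e)}}$ is a monomial ideal, and by \Cref{lem.RatSymMember}, membership $x^\a \in \overline{I^{(k/e)}}$ is governed by the inequalities $v(\a) \ge \tfrac{k}{e}\,\vhat(I)$ over all skew valuations $v$. The key observation is that the monomial ideal $\overline{I^{(k/e)}}$ has the same "shape" as an honest symbolic power after rescaling: indeed, by the proof of \Cref{thm.ratMonIdeal}, there is an integer $c$ with $\NP(I^{(c)}) = c\cdot \SP(I)$, and then $\overline{I^{(k/e)}} = \langle x^\a \mid \a \in \tfrac{k}{e}\SP(I)\rangle$ is obtained from $I^{(\lceil kc/e \rceil')}$-type ideals by a uniform dilation of the Newton polyhedron. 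So I would choose $e' = e$ (or a convenient multiple) and use the denominator $e$ to convert the filtration $\{\overline{I^{(k/e)}}\}_k$ into a filtration comparable to integral symbolic powers under the rescaling $x_i \mapsto x_i^{1/e}$, i.e.\ pass to $R^{1/e}$ once at the start so that the rational exponents become integral. Then the splitting $\Phi^R_{m_\ell}$ is applied just as in \cite[Remark 3.2]{splitting} and \Cref{rmk.Phi}, where the compatibility with \emph{ordinary} symbolic powers of squarefree monomial ideals is already known; the point is that after the dilation, $\overline{I^{(km_\ell+j)/e}}$ is governed by exactly the same linear inequalities (with rescaled right-hand sides $\tfrac{km_\ell+j}{e}\vhat(I)$) as a symbolic power, so the monomial-valuation argument proving the split injection for $I^{(k)}$ in \cite{splitting, lewis} transfers verbatim. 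Checking that $\Phi^R_{m_\ell}$ sends $\bigl(\overline{I^{(km_\ell+j)/e}}\bigr)^{1/m_\ell}$ into $\overline{I^{((k+1)/e)}}$ (surjectivity of the relevant map) and that $\iota$ lands correctly (injectivity), for the specific bounds $1\le j<m_\ell$, is then a direct computation with the polyhedra $\tfrac{k}{e}\SP(I)$, using convexity of $\SP(I)$ and homogeneity of the defining inequalities $\sum_{x_i\in\pp}a_i\ge 1$.

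The main obstacle I anticipate is the bookkeeping in the splitting compatibility: one must show that for every monomial $x^\a$ with $\a \equiv \mathbf 0 \pmod{m_\ell}$, membership $(x^\a)^{1/m_\ell} \in \bigl(\overline{I^{(km_\ell+j)/e}}\bigr)^{1/m_\ell}$ implies $x^{\a/m_\ell} \in \overline{I^{((k+1)/e)}}$, and this requires the inequality $v(\a) \ge \tfrac{km_\ell+j}{e}\vhat(I)$ to imply $v(\a/m_\ell) \ge \tfrac{k+1}{e}\vhat(I)$, i.e.\ $\tfrac{km_\ell+j}{m_\ell} \ge k+1$ — which is \emph{false} for $1\le j<m_\ell$. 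So in fact the split injection must go in the direction making the target ideal the \emph{smaller} one, matching Definition~\ref{def.AsymStable} as stated (where $I_{k+1}$ maps into a quotient by the \emph{larger} ideal $I^{km_\ell+j}$ with $km_\ell+j < (k+1)m_\ell$, hence $I^{km_\ell+j} \supseteq I^{(k+1)m_\ell} = (I_{k+1})^{[m_\ell]}$-type); I would follow the precise inequalities in \cite[Definition 2.1 and its verification]{lewis} to orient everything correctly, since the whole point of the $1\le j<m_\ell$ range is that $\overline{I^{((km_\ell+j)/e)}} \supseteq (\overline{I^{((k+1)/e)}})^{1/m_\ell}$-pullback, so the quotient map $R^{1/m_\ell}/(\text{bigger ideal}) \to$ is the natural recipient. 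Once the direction and the denominator normalization are pinned down, the argument is the monomial-valuation splitting of \cite{splitting, lewis} applied to the squarefree case, and the existence of $\lim_{u\to\infty}\reg\overline{I^{(u)}}/u$ and $\lim_{u\to\infty}\depth R/\overline{I^{(u)}}$ follows from \cite[main theorems]{lewis} on asymptotically stable filtrations together with the fact (\Cref{rmk.RSP}(1,2)) that every rational symbolic power appears in the filtration indexed by $k/e$.
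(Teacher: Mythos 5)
Your proposal identifies the right framework (Definition~\ref{def.AsymStable}, the splitting $\Phi^R_{m_\ell}$, and the monomial-valuation description of membership via \Cref{lem.RatSymMember}), but there are two genuine gaps.

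First, your Condition~(1) argument does not hold up. You claim Noetherianness of $\bigoplus_{k}\overline{I^{(k/e)}}t^k$ is ``exactly the content cited in the proof of \Cref{lem.finiteAss},'' but the Rees algebra appearing there is the one built from \emph{rational powers} $\overline{I^{k/e}}$ (the reference \cite[Proposition 2.13]{bisui2024rational} concerns rational powers, not rational symbolic powers, and the notation in that proof is a typo). Noetherianness of the rational-power Rees algebra does not automatically yield Noetherianness of the rational-\emph{symbolic}-power Rees algebra: passing from $\overline{I^{k/e}}$ to $\overline{I^{(k/e)}}$ enlarges the graded pieces and changes the algebra. The paper instead takes the $e$-th Veronese of $\R=\bigoplus_k\overline{I^{(k/e)}}t^k$, which, using \Cref{lem.intSymb}, is precisely the ordinary symbolic Rees algebra $\bigoplus_s I^{(s)}u^s$ of the squarefree monomial ideal $I$; this is Noetherian by Herzog--Hibi--Trung, and one descends to $\R$ via the Veronese lemma of Hong--Ulrich. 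Your one-line reference would need to be replaced by this argument.

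Second, in Condition~(2), you correctly notice the danger: for $1\le j<m_\ell$ one has $\frac{km_\ell+j}{m_\ell}<k+1$, so the inequality $v(\a)\ge\frac{km_\ell+j}{e}\vhat(I)$ does \emph{not} directly give $v(\a/m_\ell)\ge\frac{k+1}{e}\vhat(I)$. But then you do not resolve it; you defer to ``orienting'' things as in \cite{lewis}. The resolution in the paper is an integrality argument, and it is the crucial step: after applying \Cref{thm.equivDef}, one shows $v(e\cdot\a)\ge km_\ell+j$ for every monomial valuation $v$ defining a facet of $\SP(I)$, so $v(e\cdot\b)>k$ where $\b=\a/m_\ell$; but since $\a\equiv\mathbf 0\pmod{m_\ell}$ makes $e\cdot\b$ an integer vector and these facet valuations have integer coefficients, $v(e\cdot\b)$ is an integer strictly greater than $k$, hence $\ge k+1$. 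That rounding-up step is what makes $\Phi^R_{m_\ell}\bigl((\overline{I^{((km_\ell+j)/e)}})^{1/m_\ell}\bigr)\subseteq\overline{I^{((k+1)/e)}}$ work for all $j$ in the stated range, and it is missing from your sketch. Without it, the containment fails and the split-injection claim is unjustified.
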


\begin{proof}
We shall first show that the Rees algebra $\R = \bigoplus_{h \in \ZZ_+} \overline{I^{(\frac{h}{e})}} t^h$ is Noetherian. 
It is enough to exhibit a Veronese sub-algebra of $\R$ that is finitely generated. To this end, we consider the $e^{th}$-Veronese subalgebra of $\R$, namely,
$$\R^{\langle e\rangle}=\bigoplus_{s\ge0}\overline{I^{(\frac{se}{e})}}t^{se}=\bigoplus_{s\ge0}\overline{I^{\left(\frac{s}{1}\right)}}t^{se}=\bigoplus_{s\ge0}\overline{I^{\left(\frac{s}{1}\right)}}u^{s},$$ 
where $u:=t^{e}$. By \cite[Theorem 3.2]{herzog2006symbolicpowersmonomialideals}, it is known that the usual symbolic Rees algebra $\R_s(I)$ of $I$ is Noetherian, so it follows from Lemma \ref{lem.NoetherianRees} that $\bigoplus_{s \ge 0}\rsI{\frac{s}{1}} t^s$ is Noetherian. This, together with \cite[Lemma 1.1(a)]{Ulrich.Hong}, implies that $\R^{\langle e\rangle}$ is Noetherian.

We shall now prove that, for an unbounded sequence $\{m_{\ell}\}_{\ell\in\ZZ_+}\subseteq\ZZ_+$, $\iota$ and $\displaystyle\Phi_{m_\ell}^{R}$ induce a split injection $$\displaystyle \vartheta: \left.R\middle/{\overline{I^{(\frac{h+1}{e})}}}\right. \longrightarrow \left.R^{1/m_\ell}\middle/\left({\overline{I^{(\frac{hm_{\ell}+j}{e})}}}\right)^{1/m_\ell}\right.$$ for all $h,e\in \ZZ_+$ and $1\le j\le m_{\ell}.$
Since $\displaystyle \Phi_{m_\ell}^{R}$ is a splitting of $\iota$, it is enough to verify the following:
    \begin{enumerate}
        \item[(a)] $\displaystyle \iota\left({\overline{I^{(\frac{h+1}{e})}}}\right)\subseteq \left({\overline{I^{(\frac{hm_{\ell}+j}{e})}}}\right)^{1/m_\ell}, \text{ for }1\le j\le m_{\ell}$,
        \item[(b)] $\displaystyle \Phi_{m_\ell}^{R}\left(\left({\overline{I^{(\frac{hm_{\ell}+j}{e})}}}\right)^{1/m_\ell}\right)={\overline{I^{(\frac{h+1}{e})}}}, \text{ for }1\le j\le m_{\ell}$.
    \end{enumerate}

Indeed, to see (a), observe that, for $1\le j\le m_{\ell}$,
        \begin{equation*} 
            \begin{split}
                \displaystyle \iota\left({\overline{I^{(\frac{h+1}{e})}}}\right)&\overset{\text{def}}{\subseteq}\left({\overline{I^{(\frac{m_{\ell}(h+1)}{e})}}}\right)^{1/m_\ell}
               = \left({\overline{I^{(\frac{m_{\ell}h+m_{\ell})}{e})}}}\right)^{1/m_\ell}
               \overset{}{\subseteq}\left({\overline{I^{(\frac{hm_{\ell}+j}{e})}}}\right)^{1/m_\ell}.
            \end{split}
        \end{equation*}

  To prove (b), applying $\Phi_{m_{\ell}}^{R}$ on both side of the inclusion in (a), we get 
$$\Phi_{m_{\ell}}^{R}\left(\iota\left({\overline{I^{(\frac{h+1}{e})}}}\right)\right)\subseteq\Phi_{m_{\ell}}^{R}\left(\left({\overline{I^{(\frac{hm_{\ell}+j}{e})}}}\right)^{1/m_\ell}\right).$$
That is,
 $${\overline{I^{(\frac{h+1}{e})}}} \subseteq\Phi_{m_{\ell}}^{R}\left(\left({\overline{I^{(\frac{hm_{\ell}+j}{e})}}}\right)^{1/m_\ell}\right).$$

It remains to show that $\displaystyle \Phi_{m_{\ell}}^{R}\left(\left({\overline{I^{(\frac{hm_{\ell}+j}{e})}}}\right)^{1/m_\ell}\right)\subseteq {\overline{I^{(\frac{h+1}{e})}}}$ for any $h \in \ZZ_+$ and $1 \le j \le m_\ell$.
Indeed, consider any monomial $\left(x^\a\right)^{1/m_\ell} \in \left({\overline{I^{(\frac{km_{\ell}+j}{e})}}}\right)^{1/m_\ell}$ such that $\Phi^R_{m_\ell}\left(\left(x^\a\right)^{1/m_\ell}\right) \not= 0$. Then, $x^\a \in \overline{I^{(\frac{km_{\ell}+j}{e})}}$ and $\a \equiv \mathbf{0} (\text{mod } m_\ell)$. 

By \Cref{prop.RatSymMember} (see also Theorem \ref{thm.ratMonIdeal}), $x^\a \in \rsI{\frac{hm_\ell+j}{e}}$ is determined by the condition that $v(x^\a) \ge \frac{hm_\ell + j}{e} \cdot v(I)$ for any valuation $v \in \RV_a(I)$. (equivalently, any monomial valuation $v$ corresponding to the defining half-space of $\Sigma(I)$.) That is, $v(x^{e \cdot \a}) \ge (hm_\ell+j) \cdot v(I)$ for any $v \in \RV_a(I)$.

Set $\b = \frac{\a}{m_\ell} = \Phi^R_{m_\ell}\left(\left(x^\a\right)^{1/m_\ell}\right)$. Then, for any monomial valuation $v \in \RV_a(I)$, we get
$$v(x^{e \cdot \b}) = \frac{1}{m_\ell} v(x^{e \cdot \a}) \ge \left(h+ \frac{j}{m_\ell}\right) \cdot v(I) > h \cdot v(I).$$
On the other hand, as shown in \cite[Proposition 10.5.5]{sh} (see also \Cref{rmk.RSP}(3)), $e$ is chosen to be (any multiple of) the least common multiple of $\{v(I) ~\big|~ v \in \RV(I)\}$. Particularly, it follows that $e$ is divisible by $v(I)$ for any $v \in \RV_a(I)$. Since $v(x^{e \cdot \b}) = e \cdot v(x^\b)$, it follows that
$$v(x^{e \cdot \b}) \ge (h+1) \cdot v(I).$$
This implies that $v(x^{\b}) \ge \frac{h+1}{e} \cdot v(I)$ for any $v \in \RV_a(I)$. Equivalently, by \Cref{prop.RatSymMember}, we get $x^\b \in \rsI{\frac{h+1}{e}}$ as desired. 

This concludes the proof that the filtration of rational symbolic powers $\{\rsI{\frac{h}{e}}\}_{h \in \ZZ_+}$ is asymptotically stable. The second statement of the theorem follows from \cite[Theorem 2.1]{lewis}.
\end{proof}

\begin{remark} \label{rmk.asymptoticGen}
As observed in \cite{ha2025asymptoticregularitygradedfamilies}, for a homogeneous ideal $I \subseteq R$ in general, it is not expected that the asymptotic regularity $\lim\limits_{h \rightarrow \infty} {\reg I^{(h)}}/{h}$ exists. Thus, in general, the filtration $\big\{I^{(\frac{h}{e})}\big\}_{h \in \ZZ_+}$ is not expected to be asymptotically stable.
\end{remark}

We conclude the paper with the following result, where $\lambda(\bullet)$ denotes the length function.

\begin{theorem}
    \label{thm.lengthCoh}
    Let $I \subseteq R$ be a monomial ideal. Assume that $\lambda(H^i_\mm(R/\rsI{u})) < \infty$ for $n \gg 0$. Then, the limit
    $$\lim_{u \rightarrow \infty} \dfrac{\lambda(H^i_\mm(R/\rsI{u}))}{u^n}$$
    exists and is a rational number.
\end{theorem}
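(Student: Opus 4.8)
The plan is to pass to the integer-indexed filtration $\{J_k\}_{k\in\NN}$ with $J_k := \rsI{k/e}$ and then feed it into the splitting machinery underlying \Cref{thm.AsympStable}. By \Cref{rmk.RSP}(2) every rational symbolic power of $I$ occurs among the $J_k$, and for $u = k/e$ we have $u^n = k^n/e^n$. For a general $u > 0$, monotonicity of $v \mapsto \rsI{v}$ together with the fact that all rational symbolic powers of $I$ are among the $J_m$ forces $\rsI{u}$ to equal $J_{k-1}$ or $J_k$ for $k = \lceil eu\rceil$, so $\lambda(H^i_\mm(R/\rsI{u}))/u^n$ differs from $e^n\,\lambda(H^i_\mm(R/J_k))/k^n$ by a factor tending to $1$ as $u \to \infty$; here one uses $\lambda(H^i_\mm(R/J_k)) = O(k^n)$, a standard consequence of the Noetherianity of the Rees algebra $\bigoplus_k J_k t^k$ established in \Cref{thm.AsympStable}. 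Thus it suffices to prove that $\lim_{k\to\infty}\lambda(H^i_\mm(R/J_k))/k^n$ exists and is a rational number.

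By \Cref{thm.AsympStable}, $\{J_k\}_{k\in\NN}$ is asymptotically stable: for an unbounded sequence $\{m_\ell\}_{\ell\in\NN}$, the maps $\iota$ and $\Phi^R_{m_\ell}$ induce split injections $\vartheta\colon R/J_{k+1}\hookrightarrow R^{1/m_\ell}/(J_{km_\ell+j})^{1/m_\ell}$ for all $k,\ell$ and $1\le j\le m_\ell$. The isomorphism $R^{1/m_\ell}\cong R$ carries $(J_{km_\ell+j})^{1/m_\ell}$ to $J_{km_\ell+j}$ and the homogeneous maximal ideal of $R^{1/m_\ell}$ to $\mm$, so $\lambda\bigl(H^i_\mm(R^{1/m_\ell}/(J_{km_\ell+j})^{1/m_\ell})\bigr)=\lambda(H^i_\mm(R/J_{km_\ell+j}))$; applying $i$-th local cohomology to $\vartheta$ (which sends split injections of $R$-modules to split injections) yields $\lambda(H^i_\mm(R/J_{k+1}))\le \lambda(H^i_\mm(R/J_{km_\ell+j}))$ once the right-hand side is finite, which holds for $k\gg 0$ by hypothesis. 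This is precisely the combinatorial input processed by the splitting results of Monta\~no and N\'u\~nez-Betancourt \cite{splitting} and Lewis \cite{lewis} --- the length-of-local-cohomology counterpart of \cite[Theorem~2.1]{lewis} --- to the effect that an asymptotically stable filtration $\{J_k\}$ of monomial ideals with $\lambda(H^i_\mm(R/J_k))<\infty$ for $k\gg 0$ has $\lim_{k\to\infty}\lambda(H^i_\mm(R/J_k))/k^n$ existing and equal to a rational number. Combined with the reduction of the previous paragraph, this proves the theorem.

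The bulk of the technical content is thereby absorbed into \Cref{thm.AsympStable} and the cited splitting theorems. The step I expect to require the most care is the compatibility of local cohomology with $\vartheta$ across the ring extension $R\subseteq R^{1/m_\ell}$: one must check that, for an $R^{1/m_\ell}$-module $X$, the complexes computing $H^i_\mm(X)$ over $R$ and $H^i_{\mm'}(X)$ over $R^{1/m_\ell}$ coincide (because $\sqrt{\mm R^{1/m_\ell}}$ is the maximal ideal $\mm'$ of $R^{1/m_\ell}$) and that the two length functions agree since the residue field extension is trivial, and one must confirm that the finiteness hypothesis ``$\lambda(H^i_\mm(R/\rsI{u}))<\infty$ for $u\gg 0$'' is exactly what the splitting machinery requires and is preserved under these rescalings. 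These verifications are routine but must be spelled out.
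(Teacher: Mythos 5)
Your proposal has a genuine gap at the step where all the mathematical content lives. You outsource existence and rationality of the limit to a purported black-box result of Lewis --- a ``length-of-local-cohomology counterpart of \cite[Theorem 2.1]{lewis}'' --- asserting that any asymptotically stable filtration of monomial ideals with eventually finite $\lambda(H^i_\mm(R/J_k))$ satisfies $\lim_k \lambda(H^i_\mm(R/J_k))/k^n \in \QQ$. No such black box is available, and the paper does not invoke \Cref{thm.AsympStable} in its proof of \Cref{thm.lengthCoh} at all. The split injections coming from asymptotic stability give you one-sided length comparisons $\lambda(H^i_\mm(R/J_{k+1})) \le \lambda(H^i_\mm(R/J_{km_\ell+j}))$ (once finiteness holds), but such comparisons are nowhere near enough to force convergence of $\lambda(H^i_\mm(R/J_k))/k^n$, and they certainly say nothing about rationality of the value --- that has to come from counting lattice points in dilates of a fixed rational polyhedron (an Ehrhart-type argument).

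What the paper actually does is mimic the proof of Lewis's Theorem~4.3, which rests on Lewis's Lemma~4.2: a polyhedral description (via Takayama/Hochster-type formulas) of the degrees contributing to $H^i_\mm(R/I_k)$ in terms of the Newton polyhedron $\NP(I)$. The adaptation to the rational symbolic setting consists precisely in replacing $\NP(I)$ by the symbolic polyhedron $\SP(I)$ throughout Lemma~4.2; the membership description $\rsI{u} = \langle x^\a \mid \a \in u\cdot\SP(I)\rangle$ from \Cref{thm.ratMonIdeal} is what licenses this substitution. Once that modified lemma is in hand, Lewis's Theorem~4.3 argument runs verbatim, and your reduction from rational $u$ to the integer-indexed family $\{J_k\}_{k\in\NN}$ is a harmless bookkeeping step (though the precise statement should be that $\rsI{u}=J_{\lceil eu\rceil}$, by the interval behavior of rational powers from \cite[Proposition 10.5.5]{sh}, rather than ``$J_{k-1}$ or $J_k$''). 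In short: your setup and reduction are fine, but the appeal to asymptotic stability must be replaced by an explicit adaptation of Lewis's polyhedral lemma, which is where the rational number actually comes from.
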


\begin{proof}
    The proof follows in exactly the same argument as that of \cite[Theorem 4.3]{lewis}, referring to a rational symbolic version of \cite[Lemma 4.2]{lewis}, whose proof is obtained by replacing the Newton polyhedron $\NP(I)$ of a monomial ideal $I$ by its symbolic region $\Sigma(I)$. 
\end{proof}

\begin{question} What are other classes of ideals for which the conclusions of Theorems \ref{thm.AsympStable} and \ref{thm.lengthCoh} hold?
\end{question}
%%%%%%%%%%%%%%%%%%%%%%%%%%%%%%%%%%%%%%%%%%
%%%%%%%%%%%%%%%%%%%%%%%%%%%%%%%%%%%%%%%%%%

%\section{Saturated powers and the binomial expansion formula}

%%%%%%%%%%%%%%%%%%%%%%%%%%%%%%%%%%%%%%%%%
%%%%%%%%%%%%%%%%%%%%%%%%%%%%%%%%%%%%%%%%%

\bibliographystyle{plain}
\bibliography{main}

\end{document}